\theoremstyle{plain}
 \newtheorem{theorem}{Theorem}[section]
 \newtheorem{corollary}[theorem]{Corollary}
 \newtheorem{problem}[theorem]{Problem}
\theoremstyle{definition}
 \newtheorem{example}[theorem]{Example}
\theoremstyle{remark}
 \newtheorem{remark}[theorem]{Remark}
\def\Ker{\mathrm{Ker}}
\def\exp{\mathrm{exp}}
\def\Exp{\mathrm{Exp}}
\newcommand{\titulo}{Multivariate interpolation}
\newcommand{\authorA}{Pascual Jara}
\newcommand{\direcA}{Department of Algebra. University of Granada. SPAIN}
\newcommand{\authorB}{Joaquín Jódar}
\newcommand{\direcB}{Department of Mathematics. University of Jaén. SPAIN}
\newcommand{\authorC}{Luis Merino}
\newcommand{\direcC}{Department of Algebra. University of Granada. SPAIN}
\newcommand{\authorD}{Juan F. Ruiz}
\newcommand{\direcD}{Department of Mathematics. University of Jaén. SPAIN}
\begin{document}

\title{\titulo}
\author{\authorA\\\authorB\\\authorC\\\authorD}

\maketitle

\begin{abstract}
\noindent

The aim of this work is to show how symbolic computation can be used to perform multivariate Lagrange, Hermite and Birkhoff interpolation and help us to build more realistic interpolating functions. After a theoretical introduction in which we analyze the complexity of the method we shall focus our attention on applications.
\end{abstract}

\section*{Introduction}

Multivariate interpolation consists in given finitely many points $p_1,\ldots,p_t$ in a $n$--dimensional affine space, over a field $K$, such that $p_i\neq{p_j}$ if $i\neq{j}$, and $t$ scalars $v_1,\ldots,v_t$, determining a polynomial $F\in{K[X_1,\ldots,X_n]}$ such that $F(p_i)=v_i$ for any $i=1,\ldots,t$. This is known as the \emph{Lagrange interpolation problem}. It is well known that there are several methods and algorithms to compute such a polynomial $F$.

In this paper we will add some extra conditions and show some algorithms to find an interpolating polynomial. We will deal with possible values of the interpolating polynomials at the given points, as in the Lagrange problem, and also with values of some higher order derivatives at these points. We consider two different cases. The first deals with considering unitary vectors $U$ in $K^n$ and values for all derivatives $D_U^jF(p_i)$ for $j=1,\ldots,s(i)$, the so called \emph{Hermite interpolation problem} consists in determining a polynomial $F\in{K[X_1,\ldots,X_n]}$ satisfying all these conditions, see~\cite{Bojanov/Xu:2002, Bojanov/Xu:2007} and~\cite{Gasca/Sauer:2000}. The second deals with the same situation, but we assume some gaps may exist, i.e., there may exists some point $p_i$, some order $j\leq{s_i}$ or some vector $U$ such that we have no assigned value for $D_U^jF(p_i)$; the so called \emph{Birkhoff interpolation problem} consists in determining a polynomial $F\in{K[X_1,\ldots,X_n]}$ satisfying these conditions.

In two of the three cases when we ``\emph{homogenize}'' the relationships, i.e., when we
consider $D_U^j(p_i)=0$ and $F(p_i)=0$ for any $U$, $j$ and $p_i$, the set of all
polynomials satisfying all these relationships constitutes an ideal of the polynomial
ring $K[X_1,\ldots,X_n]$. This allows us to use some computational method in
$K[X_1,\ldots,X_n]$, at least when $K$ is a computable field, in order to determine a
general solution to the interpolation problem. This method is based on the application
of Groebner basis. On this matter we study some results about the complexity of the
algorithm and the best monomial order we need to use. The third case, the Birkhoff
interpolation problem, does not produce ideals in the polynomial ring, hence the
application of Groebner basis is not allowed. But we may reorganize the information to
work as in the previous two cases to get a general solution of the interpolation
problem, and after that, to particularize to get an specific one. Nevertheless in this
case every gap will introduce a parameter to the specific solution, hence we will obtain
the Birkhoff interpolation polynomial as a solution to an indeterminate compatible
system of linear equations. In the practice, a high number of parameters (or
equivalently gaps) slows down the computation of the interpolation polynomial.

One of the theoretical problems we solve in this paper is to determine the existence and uniqueness of the Hermite interpolation polynomial, which in the Lagrange case is well known. We obtain this result as a consequence of the study of solutions to the Birkhoff interpolation problem.

Let us describe the content of each section in this paper.

In Section one we recall some facts about multivariate Lagrange interpolation, and in Section two we introduce the multivariate Hermite interpolation by using Groebner basis theory. In particular we prove that if we consider elements $p_1,\ldots,p_t$ in $\mathbb{A}^n(K)$ such that $p_i\neq{p_j}$ if $i\neq{j}$, unitary vectors $U_{i,j_i}\in{K^n}$, $i=1\ldots,t$, $j_i=1\ldots,s(i)$, and sets $H_i\subseteq\mathbb{N}^{s(i)}$ such that if $h\in{H}$ and $h-e\in\mathbb{N}^{s(i)}$, then $h-e\in{H_i}$, and define for any $h=(h_1,\ldots,h_{s(i)})\in{H_i}$ and any polynomial $F\in{K[X_1,\ldots,X_n]}$ the derivative $D^{(h)}F=D_{U_{i,1}}^{(h_1)}\cdots{D_{U_{i,s(i)}}^{(h_{s(i)})}}F$, then for any set $\{v_{i,h}\in{K}\mid\;h\in{H_i}\}$ there is a polynomial $F\in{K[X_1,\ldots,X_n]}$ satisfying $D^{(h)}F(p_i)=v_{i,h}$ for any $p_i$ and any $h\in{H_i}$. This polynomial $F$ is unique modulo the cofinite ideal $\{F\in{K[X_1,\ldots,X_n]}\mid\;D^{(h)}F(p_i)=0,\mbox{ for all }p_i\mbox{ and }h\in{H_i}\}$.

In Section three we consider the more intricate case in which some gaps appear, i.e., when the sets $H_i$ have some gaps. In general the set $\{F\in{K[X_1,\ldots,X_n]}\mid\;D^{(h)}F(p_i)=0,\mbox{ for all }p_i\mbox{ and }h\in{H_i}\}$ is not an ideal, but we may extend the set of conditions adding those corresponding to the gaps. In this case we have new sets, $\overline{H_i}$, in such a way that the set $\{F\in{K[X_1,\ldots,X_n]}\mid\;D^{(h)}F(p_i)=0,\mbox{ for all }p_i\mbox{ and }h\in\overline{H_i}\}$ is a cofinite ideal. Let $\mathbb{G}$ be a Groebner basis, then a $K$--vector space basis of the quotient ring is parameterized by $\mathbb{N}^n\setminus\Exp(\mathbb{G})$, and this produces a determinate compatible system of linear equations: $\{D^{(h)}F(p_i)=0\}_{i,h\in\overline{H_i}}$, and an indeterminate compatible system of linear equations $\{D^{(h)}F(p_i)=v_{i,h}\}_{i,h\in{H_i}}$, where the rank of this system depends on the number of gaps.

Section four is devoted to exhibit the algorithms we develop and Section five contains some examples of how these algorithms work. In particular we are interested in showing that for a given real function the addition of conditions on higher order derivatives produces a better approximation.

\section{Multivariate Lagrange interpolation}

The problem of multivariate interpolation consists in given $t$ different elements,
$p_1,\ldots,p_t\in\mathbb{A}^n(K)$ in the affine $n$--dimensional space, and  $t$ elements $v_1,\ldots,v_t\in{K}$,
determining a polynomial $F\in{K[X_1,\ldots,X_n]}$ such that $F(p_i)=v_i$ for every
$i=1,\ldots,t$.

\bigskip\noindent
The following algorithm allows us to determine a polynomial $F$ satisfying this property;
see~\cite{deBoor,Pistone/Wynn:1996}. For any index $i$ we consider a linear operator
$L_i:K[X_1,\ldots,X_n]\longrightarrow{K}$ defined as:
\[
 L_i(G)=G(p_i),\mbox{ for every }G\in{K[X_1,\ldots,X_n]}.
\]
The kernel of $L_i$ is $\Ker(L_i)=\{G\in{K[X_1,\ldots,X_n]}\mid\;G(p_i)=0\}$, i.e.,
$\Ker(L_i)=(X_1-p_{i,1},\ldots,X_n-p_{i,n})$, where $p_i=(p_{i,1},\ldots,p_{i,n})$. Thus
$\Ker(L_i)$ is exactly the ideal of the point $p_i$.

The ideal $I=\cap_{i=1}^t\Ker(L_i)=\mathcal{I}(\{p_1,\ldots,p_t\})$, is the ideal of the
finite set $\{p_1,\ldots,p_t\}$. As a consequence the quotient ring $K[X_1,\ldots,X_n]/I$ is
a finite dimensional vector space over the field $K$. Therefore there exists a finite
basis of $K[X_1,\ldots,X_n]/I$.

There are different methods to compute effectively this basis. Indeed, we consider a monomial order in $\mathbb{N}^n$, hence every non--zero polynomial $F\in{K[X_1,\ldots,X_n]}$ can be written uniquely as $\sum_{\alpha\in{A}\in\mathbb{N}^n}k_\alpha{X^\alpha}$, where $A$ is a finite set, $0\neq{k_\alpha}\in{K}$ and $X^\alpha=X_1^{\alpha_1}\cdots{X_n^{\alpha_n}}$ whenever $\alpha=(\alpha_1,\ldots,\alpha_n)\in\mathbb{N}^n$.
We call $\exp(F)$ the maximum in $A$, and define $\Exp(I)=\{\exp(F)\in\mathbb{N}^n\mid\;F\in{I}\}$. A Groebner basis if $I$ is a set $\{G_1,\ldots,G_s\}\subseteq{I}$ such that $\Exp(I)=\{\exp(G_1),\ldots,\exp(G_s)\}+\mathbb{N}^n$. The existence of a Groebner basis for each ideal $I\subseteq{K[X_1,\ldots,X_n]}$ is well known as it is the uniqueness of a reduced Groebner basis. See~\cite{COX}.
The quotient ring $K[X_1,\ldots,X_n]/I$ has finite dimension over $K$ if and only if the set $\mathcal{B}=\mathbb{N}^n\setminus\Exp(I)$ is finite, and it can be taken as an index set for a vector space basis of $K[X_1,\ldots,X_n]/I$. Indeed, the set $\{X^\beta+I\mid\;\beta\in\mathcal{B}\}$ is a basis.

For any polynomial
$F$, which is a solution of the interpolation problem, i.e., $F(p_i)=v_i$ for any
index $i=1,\ldots,t$, we may write
$$
F+I=\sum_{\beta\in\mathcal{B}}k_\beta{X^\beta}+I\mbox{ for some }k_\beta\in{K}.
$$
Let $F_0$ be the polynomial $\sum_{\beta\in\mathcal{B}}k_\beta{X^\beta}$, then
$F(p_i)=F_0(p_i)$, for any index $i=1,\ldots,t$. Hence $F_0$ is an interpolating
polynomial which we may compute by solving the linear equation system in the unknowns $\{k_\beta\}_{\beta\in\mathcal{B}}$:
\[
\left.
\sum_{\beta\in\mathcal{B}}k_\beta{p_i^\beta}=v_i,\;{i=1,\ldots,t}
\right\}
\]
Observe that the uniqueness of the interpolating polynomial is determined modulus the ideal $I$ as if $F_1,F_2$ are interpolating polynomials for any index $i=1,\ldots,t$ we have $F_1(p_i)=v_i=F_2(p_i)$, hence $F_1-F_2\in{I}$.
We remark that different monomial orders give, in general, different interpolating polynomials.

In order to study the existence of the interpolating polynomial we address to the next sections in which we study the multivariate Hermite and Birkhoff interpolation problem.

\section{Multivariate Hermite interpolation}\label{se:two}

In addition to the conditions in the Lagrange interpolation problem, it is of interest, sometimes, to impose some extra conditions in order to put more information in the interpolating polynomial. Derivatives is one of the tools we shall use. Intuitively, in the univariate case, if we have two different points $p_1$ and $p_2$, two values
$v_1$ and $v_2$, and we want to compute an interpolating polynomial $F$ such that $F(p_i)=v_i$ we may proceed in the usual way.
But if the point $p_2$ tends to $p_1$, we would like to use the derivative in $p_1$ instead $p_2$. Thus the univariate Hermite interpolation deals with the problem of computing an interpolating polynomial
$F$ such that given different elements $p_1,\ldots,p_t\in{K}$, and elements $v_{0,1},\ldots,v_{0,t},v_{1,1},\ldots,v_{1,t}\in{K}$ the following relations hold:
\[
\left.
\begin{array}{ll}
 F(p_i)=v_{0,i},\\
 F'(p_i)=v_{1,i}
\end{array}
\right\}
\qquad\mbox{ for every }i=1,\ldots,t.
\]
We may also consider derivatives of higher order. In this case we need to impose some extra
condition in order to assure we deal with ideals in the polynomial ring. See remarks
below.

\vskip.5cm
Let us describe the multivariate Hermite interpolation problem.

Let $p_1,\ldots,p_t\in\mathbb{A}^n(K)$, such that $p_i\neq{p_j}$ if $i\neq{j}$,
linearly independent unitary vectors $U_{i,j_i}\in{K^n}$, for $i=1,\ldots,t$, $j_i=1,\ldots,s(i)$, and
elements $v_1,\ldots,v_t\in{K}$, $v_{i,j_i,h}\in{K}$, $i=1,\ldots,t$, $j_i=1,\ldots,s(i)$, $h=1,\ldots,s(i,j_i)$.
The multivariate Hermite interpolation problem consists in determining a polynomial
$F\in{K[X_1,\ldots,X_n]}$ such that:
\begin{equation}\label{eq:0}
\left.
\begin{array}{ll}
 F(p_i)=v_i,\mbox{ for every }i=1,\ldots,t;\mbox{ and}\\\\
 D^{(h)}_{U_{i,j_i}}F(p_i)=v_{i,j_i,h},\mbox{ for every }i=1,\ldots,t,\;j_i=1,\ldots,s(i),\;h=1,\ldots,s(i,j_i)
\end{array}
\right\}
\end{equation}
In order to determine such a polynomial $F$ let us first consider the problem of determining all polynomials $F\in{K[X_1,\ldots,X_n]}$ such that:
\begin{equation}\label{eq:05}
\left.
\begin{array}{ll}
 F(p_i)=0,\mbox{ for every }i=1,\ldots,t;\mbox{ and}\\\\
 D^{(h)}_{U_{i,j_i}}F(p_i)=0,\mbox{ for every }i=1,\ldots,t,\;j_i=1,\ldots,s(i),\;h=1,\ldots,s(i,j_i)
\end{array}
\right\}
\end{equation}

To study these polynomials we consider a single point, say $p_i$, we assume $p_i=0$, and the system
\begin{equation}\label{eq:1}
\left.
\begin{array}{l}
 F(p_i)=0,\\\\
 D^{(h)}_{U_{i,j_i}}F(p_i)=0,\mbox{ for every }j_i=1,\ldots,s(i),\;h=1,\ldots,s(i,j_i)
\end{array}
\right\}
\end{equation}
The sum of two roots of this system also is, and for any root $F$ and any $G\in{K[X_1,\ldots,X_n]}$ we have:
\[
(FG)(p_i)=F(p_i)G(p_i)=0G(p_i)=0.
\]
\[
D^{(h)}_{U_{i,j_i}}(FG)(p_i)=\sum_{k=0}^hD^{(h-k)}_{U_{i,j_i}}F(p_i)D^{(k)}_{U_{i,j_i}}G(p_i)=0.
\]
Thus the set of all roots of system~\eqref{eq:1} constitutes an ideal, say $I_i$. We claim the ideal $I_i$ is co--finite. In fact, $I_i$ contains the ideal $J_i=\langle{X_1^{e_1}\cdots{X_n^{e_n}}\mid\;e_1+\cdots+e_n=s(i,j_i)+1}\rangle$.

Now we may study the set of roots of system~\eqref{eq:05}, i.e., the intersection $I=I_1\cap\ldots\cap{I_t}$. Hence it is an ideal of $K[X_1,\ldots,X_n]$. We have that $I$ is co--finite as $I$ is an intersection of finitely many co--finite ideals.

If $\mathbb{G}$ is a reduced Groebner basis of $I$ and $\Exp(I)\subseteq\mathbb{N}^n$ is the mono-ideal of all exponents of elements in $I$, a basis of the vector space $K[X_1,\ldots,X_n]/I$ is parameterized by $\mathcal{B}=\mathbb{N}^n\setminus\Exp(I)$. Indeed, if we consider the basis $\{X^\beta\mid\;\beta\in\mathcal{B}\}$, a generic element in $K[X_1,\ldots,X_n]/I$ has a representative in the form
\[
 \sum_{\beta\in\mathcal{B}}k_\beta{X^\beta}
\]
for some $k_\beta\in{K}$. Every condition in~\eqref{eq:0} produces a linear equation with unknowns in
$\{k_\beta\mid\;\beta\in\mathcal{B}\}$. Thus we obtain a system of linear equations:
\begin{equation}\label{eq:2}
\left.
\begin{array}{ll}
 (\sum_{\beta\in\mathcal{B}}k_\beta{X^\beta})(p_i)=v_i,\mbox{ for every }i=1,\ldots,t;\mbox{ and}
 \\\\
 D^{(h)}_{U_{i,j_i}}(\sum_{\beta\in\mathcal{B}}k_\beta{X^\beta})(p_i)=v_{i,j_i,h},\;\mbox{for every}\,i=1,\ldots,t,\,j_i=1,\ldots,s(i),\,h=1,\ldots,s(i,j_i)
\end{array}
\right\}
\end{equation}
Any solution of system~\eqref{eq:2} gives an interpolating polynomial.

Observe that if this solution exists, it is unique modulo the ideal $I$.

\subsection{Generalized multivariate Hermite interpolation}\label{subsec:21}

We may also consider a more general version of the Hermite interpolation problem. Indeed,
we consider elements $p_1,\ldots,p_t\in\mathbb{A}^n(K)$ such that $p_i\neq{p_j}$ if
$i\neq{j}$, linearly independent unitary vectors $U_{i,j_i}\in{K^n}$, $i=1,\ldots,t$, $j_i=1,\ldots,s(i)$ and
sets $H_i\subseteq\mathbb{N}^{s(i)}$ such that if $h\in{H_i}$, and
$h-e\in\mathbb{N}^{s(i)}$, then $h-e\in{H_i}$ for every $e\in\mathbb{N}^{s(i)}$.

For any $h=(h_1,\ldots,h_{s(i)})\in{H_i}$ and any polynomial $F\in{K[X_1,\ldots,X_n]}$ we define
\[
D^{(h)}F=D^{(h_1)}_{U_{i,1}}\cdots{D^{(h_{s(i)})}_{U_{i,s(i)}}F}.
\]
With this notation, the generalized multivariate Hermite problem consists in, given
$\{v_{i,h}\mid\;h\in{H_i}\}\subseteq{K}$, determining a
polynomial $F\in{K[X_1,\ldots,X_n]}$ such that:
\begin{equation}\label{eq:3}
\left.
\begin{array}{l}
 D^{(h)}F(p_i)=v_{i,h},\mbox{ for every }i=1,\ldots,t,\;h\in{H_i}
\end{array}
\right\}
\end{equation}

Observe that for any index $i=1,\ldots,t$ we may prove, \emph{mutatis--mutandi}, that the set of solutions of
\begin{equation}\label{eq:4}
\left.
\begin{array}{l}
 D^{(h)}F(p_i)=0,\mbox{ for every }i=1,\ldots,t,\;h\in{H_i}
\end{array}
\right\}
\end{equation}
is an ideal, say $I_i$. Hence the algorithm may be built as we exemplified above. This
will be the algorithm we develop in Section~\ref{se:algo}.

Let us resume in the following theorem these results.

\begin{theorem}\label{th}
With the notation in this section there exists a unique interpolating polynomial $F$,
modulo the ideal $I$, such that
\[
\left.
D^{(h)}F(p_i)=v_{i,h},\mbox{ for every }i=1,\ldots,t;\;h\in{H_i}
\right\}
\]
\end{theorem}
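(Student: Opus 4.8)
The plan is to recognize the interpolation data as a single $K$--linear map and reduce the theorem to a dimension count. Consider the evaluation map
\[
\Phi:K[X_1,\ldots,X_n]/I\longrightarrow\prod_{i=1}^t K^{H_i},\qquad F+I\longmapsto\big(D^{(h)}F(p_i)\big)_{i,\,h\in H_i}.
\]
This is well defined precisely because $I$ is the ideal of all $F$ annihilating every condition in~\eqref{eq:4}; indeed $I=\Ker\Phi$ by construction, so $\Phi$ is injective. Uniqueness modulo $I$ is then immediate: if $F_1,F_2$ both satisfy~\eqref{eq:3}, then $F_1-F_2$ maps to $0$ under $\Phi$, hence lies in $I$. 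The whole content of the theorem is therefore the \emph{existence} of an interpolant, i.e. the surjectivity of $\Phi$; since $\Phi$ is injective, it suffices to show $\dim_K K[X_1,\ldots,X_n]/I=\sum_{i=1}^t|H_i|$.

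The second step is to localize the problem at each point by the Chinese Remainder Theorem. Because the $p_i$ are pairwise distinct, the maximal ideals $\mathcal{M}_i=\mathcal{I}(\{p_i\})$ are pairwise comaximal, and each $I_i$ is $\mathcal{M}_i$--primary: as noted above each $I_i$ is co--finite with $\mathcal{M}_i^{N}\subseteq I_i\subseteq\mathcal{M}_i$ for a suitable power $N$, so $I_i+I_j=K[X_1,\ldots,X_n]$ whenever $i\neq j$. Hence $I=I_1\cap\cdots\cap I_t$ and
\[
K[X_1,\ldots,X_n]/I\;\cong\;\prod_{i=1}^t K[X_1,\ldots,X_n]/I_i,
\]
under which $\Phi$ becomes the product of the local maps $\Phi_i:K[X_1,\ldots,X_n]/I_i\to K^{H_i}$, since $D^{(h)}F(p_i)$ depends only on $F+I_i$. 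It therefore suffices to prove that each $\Phi_i$ is an isomorphism, and in particular that $\dim_K K[X_1,\ldots,X_n]/I_i=|H_i|$.

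The third step is the local computation, which is where I expect the real work to lie. Fixing $i$ and translating so that $p_i=0$, I would use that $U_{i,1},\ldots,U_{i,s(i)}$ are linearly independent to choose an invertible linear change of coordinates $X=AY$ whose first $s(i)$ columns are the $U_{i,j}$; this intertwines $D_{U_{i,j}}$ with $\partial_{Y_j}$ for $j\le s(i)$, and as these operators commute the operator $D^{(h)}$ becomes the pure partial derivative $\partial_{Y_1}^{h_1}\cdots\partial_{Y_{s(i)}}^{h_{s(i)}}$, the change of variables being a ring automorphism preserving $I_i$ and all dimensions. In these coordinates a direct computation gives $D^{(h)}(Y^{h'})(0)=h!\,\delta_{h,h'}$ with $h!=h_1!\cdots h_{s(i)}!$, so that $I_i$ is exactly the space of polynomials whose Taylor coefficients indexed by $H_i$ vanish, the residues $\{Y^{h}+I_i\mid h\in H_i\}$ form a basis of the quotient, and $\Phi_i$ is represented by the diagonal matrix $\mathrm{diag}(h!)_{h\in H_i}$. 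Equivalently, $F_i=\sum_{h\in H_i}v_{i,h}\,Y^{h}/h!$ solves the local problem explicitly; gluing the $F_i$ through the isomorphism above produces the global interpolant.

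The main obstacle is concentrated in this last step. One must check that the change of coordinates converts directional into partial derivatives exactly, that the downward--closedness of $H_i$ is what forces the coefficient--vanishing conditions to cut out an \emph{ideal} (the Leibniz expansion $D^{(h)}(FG)(0)=\sum_{k\le h}\binom{h}{k}D^{(k)}F(0)\,D^{(h-k)}G(0)$ stays in $I_i$ only because every $k\le h$ again lies in $H_i$), and that the functionals $D^{(h)}(\cdot)(0)$ are linearly independent in the expected number. This computation, and hence the whole theorem, requires the factorials $h!$ to be invertible in $K$ --- automatic in the characteristic--zero setting of the applications --- since otherwise $D^{(h)}(\cdot)(0)$ could vanish identically and the problem need not be solvable for arbitrary data. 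Once the diagonal form of each $\Phi_i$ is established, surjectivity and the dimension count follow at once, completing both existence and uniqueness.
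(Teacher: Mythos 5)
Your proof is correct (granted characteristic zero, which you rightly isolate as a genuine hypothesis), but it follows a genuinely different route from the paper's. The paper proves Theorem~\ref{th} in one line: uniqueness modulo $I$ is immediate from the definition of $I$, and existence is quoted from Corollary~\ref{co:090304d}, i.e.\ from the Birkhoff existence Theorem~\ref{th:existence}. That theorem is proved by \emph{filling the gaps}: each $H_i$ is enlarged to the full lower set $\overline{H_i}=\{h:|h|\leq b_i\}$, the resulting solution sets $J_i$ are cofinite ideals, their pairwise comaximality gives $\dim_K K[X_1,\ldots,X_n]/J=d_1+\cdots+d_t$ via the isomorphism theorems, and then the square homogeneous system~\eqref{eq:090304b} has only the trivial solution (a polynomial supported on $\mathcal{B}$ that lies in $J$ is its own normal form, hence zero), so its matrix is regular and any system built from a subset of its rows with arbitrary right-hand sides --- in particular the Hermite system~\eqref{eq:090304c} --- is solvable. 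You instead work directly with the Hermite ideals $I_i$ and never fill gaps: the same comaximality/Chinese-Remainder decomposition reduces the problem to a single point, where you diagonalize the local evaluation map explicitly via $D^{(h)}(Y^{h'})(0)=h!\,\delta_{h,h'}$, obtaining the exact count $\dim_K K[X_1,\ldots,X_n]/I=\sum_i|H_i|$, an explicit local interpolant $\sum_{h}v_{i,h}Y^{h}/h!$, and bijectivity of $\Phi$. The trade-off is instructive: the paper's detour buys the strictly more general Birkhoff statement (with Hermite as a corollary) while never computing any local basis, whereas your argument is self-contained, produces the interpolant in closed form, and makes visible two hypotheses the paper leaves implicit --- that downward-closedness of $H_i$ is exactly what makes the Leibniz expansion close up into an ideal, and that invertibility of the factorials (characteristic zero) is indispensable. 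Moreover, the paper's identification of $J_i$ with the monomial ideal $\langle X^\varepsilon\mid\;|\varepsilon|\geq b_i+1\rangle$, and its equation count $|\overline{H_i}|=d_i$, tacitly assume that $U_{i,1},\ldots,U_{i,s(i)}$ span $K^n$ (i.e.\ $s(i)=n$); your local computation handles $s(i)<n$ without change, since the monomials $Y^\alpha$ with $\alpha$ not of the form $(h,0)$, $h\in H_i$, simply fall into $I_i$.
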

\begin{proof}
We only need to prove the existence of such an interpolating polynomial $F$; it is a direct consequence of Corollary~\eqref{co:090304d}.
\end{proof}

\section{Multivariate Birkhoff interpolation}\label{se:bir}

The multivariate Hermite interpolation method does not work if there exist some derivative gaps, as in this case we
can not assure $I_i$ is an ideal. This problem may be solved if we fill these gaps in order to compute the ideal associated to each point $p_i$. Let us perform this process and show how to compute this new ideal.

As in subsection~\eqref{subsec:21}, let us consider points $p_1,\ldots,p_t$ such that $p_i\neq{p_j}$ if $i\neq{j}$, linearly independent unitary vectors $U_{i,j_i}\in{K^n}$, $i=1,\ldots,t$, $j_i=1,\ldots,s(i)$ and sets $H_i\subseteq\mathbb{N}^{s(i)}$. As usual, for any index $i$, any $h\in{H_i}$ and any polynomial $F\in{K[X_1,\ldots,X_n]}$ we define
\[
D^{(h)}F=D^{(h_1)}_{U_{i,1}}\cdots{D^{(h_{s(i)})}_{U_{i,s(i)}}F}.
\]
The multivariate Birkhoff interpolation problem consists in given $\{v_{i,h}\mid\;h\in{H_i}\}\subseteq{K}$ determining a polynomial $F\in{K[X_1,\ldots,X_n]}$ such that
\[
\left.
D^{(h)}F(p_i)=v_{i,h},\mbox{ for every }i=1,\ldots,t,\;h\in{H_i}
\right\}
\]

If we fix an index $i$, we are interested in the set of all polynomials
\[
\left\{F\in{K[X_1,\ldots,X_n]}\mid\;D^{(h)}F(p_i)=0,\;\mbox{ for all }h\in{H_i}\right\}.
\]
Contrary to the theory developed in subsection~\eqref{subsec:21}, this set of polynomials is not an ideal. To remedy this hitch we proceed as follows. First we define $b_i$ the maximum in the set
\[
\left\{|h|:=\sum_{l=1}^{s(i)}h_l\mid\;h=(h_l)_l\in{H_i}\right\},
\]
and define $\overline{H_i}=\{h\in\mathbb{N}^{s(i)},\mbox{ such that }|h|\leq{b_i}\}$. Second we consider the polynomial equations
\begin{equation}\label{eq:090304}
\left.
D^{(h)}F(p_i)=0,\mbox{ for every }h\in\overline{H_i}
\right\}
\end{equation}

Let $J_i$ the set of all polynomials satisfying these equations. We obtain that $J_i$ is an ideal of $K[X_1,\ldots,X_n]$.

Similar arguments, to those developed in section~\eqref{se:two}, give us that the ideal $J_i$ is cofinite. Hence if we define $J=J_1\cap\ldots\cap{J_t}$, then $J$ is a cofinite ideal of $K[X_1,\ldots,X_n]$.

Let $\mathbb{G}$ be a Groebner basis of $J$ and $\mathcal{B}$ the complement in $\mathbb{N}^n$ of $\Exp(J)$, then $\mathcal{B}$ has finitely many elements and $\{X^\beta\mid\;\beta\in\mathcal{B}\}$ is a vector space basis of the quotient $K[X_1,\ldots,X_n]/J$. So every element in $K[X_1,\ldots,X_n]/J$ has a unique representation of the shape $\sum_{\beta\in\mathcal{B}}k_\beta{X^\beta}$, where $k_\beta\in{K}$.

In order to compute an interpolating polynomial $F$ which is a solution of the multivariate Birkhoff interpolating problem we need to solve the linear equation system
\[
\left.
D^{(h)}\left(\sum_{\beta\in\mathcal{B}}k_\beta{X^\beta}\right)(p_i)=v_{i,h},\;i=1,\ldots,t,\;h\in{H_i}
\right\}
\]

\begin{theorem}\label{th:existence}
With the notation in this section there exists an interpolating polynomial $F$ such that
\[
\left.
D^{(h)}F(p_i)=v_{i,h},\;\mbox{ for every }i=1,\ldots,t;\;h\in{H_i}
\right\}
\]
\end{theorem}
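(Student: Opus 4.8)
The plan is to deduce the existence of the Birkhoff interpolating polynomial directly from the generalized Hermite existence result, Theorem~\ref{th}, by exploiting the completion $\overline{H_i}$ that was introduced precisely to restore the ideal structure. The point is that the Birkhoff data are prescribed only on the possibly lacunary sets $H_i$, whereas the completed sets $\overline{H_i}=\{h\in\mathbb{N}^{s(i)}\mid\;|h|\leq b_i\}$ are free of gaps; since $H_i\subseteq\overline{H_i}$, every Birkhoff condition is one of the conditions indexed by $\overline{H_i}$. Thus it suffices to solve the \emph{larger}, gap--free Hermite problem over the sets $\overline{H_i}$ and then to read off the prescribed subfamily.

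First I would check that each $\overline{H_i}$ satisfies the downward--closure hypothesis required in Section~\ref{subsec:21}: if $h\in\overline{H_i}$ and $h-e\in\mathbb{N}^{s(i)}$, then $|h-e|\leq|h|\leq b_i$, so $h-e\in\overline{H_i}$. Consequently the sets $\overline{H_i}$, together with the same points $p_i$ and vectors $U_{i,j_i}$, fall verbatim under the hypotheses of Theorem~\ref{th}, the associated ideal being exactly $J=J_1\cap\cdots\cap J_t$ defined in this section.

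Next I would extend the given data from $H_i$ to $\overline{H_i}$. For each index $i$ and each $h\in\overline{H_i}$ set $\overline{v}_{i,h}=v_{i,h}$ when $h\in H_i$ and $\overline{v}_{i,h}=0$ otherwise; the choice on the gap positions $\overline{H_i}\setminus H_i$ is immaterial. Applying Theorem~\ref{th} to the family $\{\overline{v}_{i,h}\mid\;h\in\overline{H_i}\}$ produces a polynomial $F\in K[X_1,\ldots,X_n]$ with $D^{(h)}F(p_i)=\overline{v}_{i,h}$ for every $i$ and every $h\in\overline{H_i}$. Restricting this family of equalities to the indices $h\in H_i$ yields $D^{(h)}F(p_i)=v_{i,h}$ for all $i$ and all $h\in H_i$, which is exactly the Birkhoff interpolation problem; hence such an $F$ exists.

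The one step that carries all the weight is the appeal to the \emph{existence} (as opposed to uniqueness) half of Theorem~\ref{th}: we need the evaluation map $F\longmapsto(D^{(h)}F(p_i))_{i,\,h\in\overline{H_i}}$ on $K[X_1,\ldots,X_n]/J$ to be surjective, so that arbitrary values on the completed sets --- in particular our extension $\overline{v}_{i,h}$ --- are attainable. This surjectivity is precisely what Theorem~\ref{th} guarantees, since $J$ is by construction the kernel of that map and the completion was designed to make $J$ a cofinite ideal. I would note, finally, that uniqueness is genuinely lost here: the values on the gap positions $\overline{H_i}\setminus H_i$ are free, so distinct choices generally give non--congruent solutions modulo $J$, which is why Theorem~\ref{th:existence} asserts existence only.
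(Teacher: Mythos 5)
There is a genuine gap: your argument is circular inside this paper. The only substantive input you use is the \emph{existence} half of Theorem~\ref{th}, but the paper never proves that half independently: the proof of Theorem~\ref{th} consists of the single remark that existence ``is a direct consequence of Corollary~\ref{co:090304d}'', and Corollary~\ref{co:090304d} is itself deduced from Theorem~\ref{th:existence} --- the very statement you are asked to prove. The paper's logical order is the reverse of yours: Birkhoff existence is established first, by a direct argument, and Hermite existence (Corollary~\ref{co:090304d}, completing Theorem~\ref{th}) is then read off from it, since Hermite data are a particular case of Birkhoff data. Your reduction of the Birkhoff problem to the gap--free problem over the completed sets $\overline{H_i}$ is a correct observation, but it transfers the entire burden onto a statement whose proof, in this paper, rests on the theorem being proved; as written, nothing is established.

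Concretely, the step you yourself say ``carries all the weight'' --- surjectivity of the evaluation map $\Phi\colon K[X_1,\ldots,X_n]/J \longrightarrow K^d$, $F+J \mapsto (D^{(h)}F(p_i))_{i,\,h\in\overline{H_i}}$, where $d=\sum_i\#\overline{H_i}$ --- is exactly what must be proved, and your proposal offers no argument for it beyond citing Theorem~\ref{th}. Injectivity of $\Phi$ is immediate, because $\Ker(\Phi)=J/J=0$ by the definition of $J$; surjectivity therefore reduces to the dimension count $\dim_K K[X_1,\ldots,X_n]/J=d$. That count is the heart of the paper's proof: translating so that $p_i=0$, the ideal $J_i$ is generated by the monomials of degree $b_i+1$ and has finite codimension $d_i$, equal to the number $\#\overline{H_i}$ of equations at $p_i$; the ideals $J_1,\ldots,J_t$ are pairwise comaximal (coprime powers of $X_1-p_{i,1}$ and $X_1-p_{j,1}$ lie in $J_i$ and $J_j$ when $p_{i,1}\neq p_{j,1}$), an induction gives $(J_1\cap\cdots\cap J_{i-1})+J_i=K[X_1,\ldots,X_n]$, and the isomorphism theorems then yield $\dim_K K[X_1,\ldots,X_n]/(J_1\cap\cdots\cap J_t)=d_1+\cdots+d_t$. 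From this the paper concludes that the homogeneous system $D^{(h)}\bigl(\sum_{\beta\in\mathcal{B}}k_\beta X^\beta\bigr)(p_i)=0$, for all $i$ and $h\in\overline{H_i}$, is a square system with only the trivial solution, so its matrix is regular, and the Birkhoff system --- the subsystem of rows indexed by $h\in H_i$, with right--hand sides $v_{i,h}$ --- is compatible. If you supply this comaximality/dimension argument your reduction becomes sound, but also redundant: you would have reproduced the paper's proof and could conclude Theorem~\ref{th:existence} directly.
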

\begin{proof}
For any index $i=1,\ldots,t$ we consider the ideal $J_i$. This ideal can be computed in an easy way. Indeed, if we assume $p_i=0$, then $J_i$ is the ideal generated by all monomials $X^\varepsilon$ such that $|\varepsilon|\geq{b_i}+1$. In particular, a minimal set of generators of $J_i$ is
\[
\left\{X^\varepsilon\mid\;|\varepsilon|=b_i+1\right\}.
\]
The codimension of $J_i$ is finite, say $d_i$.

We claim the codimension of $J=J_1\cap\ldots\cap{J_t}$ is $d:=d_1+\cdots+d_t$.

First we show that $J_i+J_j=K[X_1,\ldots,X_n]$ whenever $i\neq{j}$. Indeed, in this case there is a component, say 1, such that $p_{i,1}\neq{p_{j,1}}$. Since
$(X_1-p_{i,1})^{b_i}\in{J_i}$,
$(X_1-p_{j,1})^{b_j}\in{J_j}$ and
$X_1-p_{i,1}$, $X_1-p_{j,1}$ are coprime,
then $J_i+J_j=K[X_1,\ldots,X_n]$.
We claim $(J_1\cap{J_2})+J_3=K[X_1,\ldots,X_n]$. Indeed, there are $x_{1,3}\in{J_1}$, $x_{2,3}\in{J_2}$ and $x_{3,1},x_{3,2}\in{J_3}$ such that $x_{1,3}+x_{3,1}=1=x_{2,3}+x_{3,2}$, then $1=x_{1,3}x_{2,3}+(x_{1,3}x_{3,2}+x_{3,1}x_{2,3}+x_{3,1}x_{3,2})\in{(J_1\cap{J_2})+J_3}$. By induction we obtain $(J_1\cap\ldots\cap{J_{i-1}})+J_i=K[X_1,\ldots,X_n]$ for any $2\leq{i}\leq{t}$.

Now the result follows, by induction on $i$, from the following isomorphisms:
\[
\begin{array}{ll}
\begin{displaystyle}
\frac{K[X_1,\ldots,X_n]/(J_1\cap\ldots\cap{J_i})}{J_i/(J_1\cap\ldots\cap{J_i})}
\cong\frac{K[X_1,\ldots,X_n]}{J_i},
\end{displaystyle}\\\\
\begin{displaystyle}
\frac{J_i}{(J_1\cap\ldots\cap{J_i})}
\cong\frac{J_1\cap\ldots\cap{J_{i-1}}+J_i}{J_1\cap\ldots\cap{J_{i-1}}}
\cong\frac{K[X_1,\ldots,X_n]}{J_1\cap\ldots\cap{J_{i-1}}}.
\end{displaystyle}
\end{array}
\]
Observe that $\#(\mathcal{B})=d=d_1+\cdots+d_t$.

We consider the linear system
\begin{equation}\label{eq:090304b}
\left.
D^{(h)}\left(\sum_{\beta\in\mathcal{B}}k_\beta{X^\beta}\right)(p_i)=0,
\mbox{ for every }i=1,\ldots,t,\;h\in\overline{H_i}
\right\}
\end{equation}
This system has $d$ equations as it has $d_i$ equations for every index $i$. The matrix of this system is a square matrix. Since the system has unique solution, this matrix is regular.

Therefore if we consider the linear system
\begin{equation}\label{eq:090304c}
\left.
D^{(h)}\left(\sum_{\beta\in\mathcal{B}}k_\beta{X^\beta}\right)(p_i)=v_{i,h},
\mbox{ for every }i=1,\ldots,t,\;h\in{H_i}
\right\}
\end{equation}
it is an indeterminate compatible linear system. In particular there is at least a solution. Hence there is an interpolating polynomial which is a solution to the multivariate Birkhoff interpolation problem..
\end{proof}

We remark that the solution to the Birkhoff problem is not unique, in general if there are some derivative gaps.

As a consequence of this Theorem we have the following result which completes the proof of Theorem~\eqref{th}

\begin{corollary}\label{co:090304d}
There is an interpolating polynomial solving the multivariate Hermite interpolation problem.
\end{corollary}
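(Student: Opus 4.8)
The plan is to read Corollary~\ref{co:090304d} as a direct specialization of Theorem~\ref{th:existence}. The generalized Hermite problem of subsection~\ref{subsec:21} and the Birkhoff problem of Section~\ref{se:bir} are posed with \emph{the same data}---points $p_1,\ldots,p_t$, linearly independent unitary vectors $U_{i,j_i}$, finite index sets $H_i\subseteq\mathbb{N}^{s(i)}$, values $\{v_{i,h}\mid h\in H_i\}$---and they ask for a polynomial satisfying the same conditions $D^{(h)}F(p_i)=v_{i,h}$. The only extra hypothesis carried by the Hermite problem is that each $H_i$ be a lower set, closed under $h\mapsto h-e$. First I would point out that this hypothesis is never used in the \emph{existence} half of Theorem~\ref{th:existence}: there the $H_i$ are arbitrary, and one passes to $\overline{H_i}=\{h\in\mathbb{N}^{s(i)}\mid |h|\le b_i\}$ whether or not $H_i$ is already downward closed.

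Concretely, I would apply Theorem~\ref{th:existence} verbatim to the Hermite data. Since $b_i=\max\{|h|\mid h\in H_i\}$, we have $H_i\subseteq\overline{H_i}$, so the homogeneous system~\eqref{eq:090304b} indexed by $\overline{H_i}$ is the square, nonsingular system constructed there, while the inhomogeneous system~\eqref{eq:090304c} indexed by $H_i$ is one of its compatible subsystems. Theorem~\ref{th:existence} then yields a polynomial $F\in K[X_1,\ldots,X_n]$ with $D^{(h)}F(p_i)=v_{i,h}$ for all $i=1,\ldots,t$ and all $h\in H_i$, which is precisely the claim. This also supplies the missing existence half of Theorem~\ref{th}, whose uniqueness modulo $I$ had already been read off from the linear-algebra description of the solution set.

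The only thing to verify is the inclusion $H_i\subseteq\overline{H_i}$, and this is immediate from the definitions, so I expect no genuine obstacle. All the real content---that the homogeneous conditions carve out a cofinite ideal and that the associated square system is regular---has already been discharged inside Theorem~\ref{th:existence}, so citing it is cleaner than rerunning the codimension count $\#\mathcal{B}=d_1+\cdots+d_t$ in the special case where every $H_i$ is a staircase. Should a self-contained argument be wanted instead, the one step requiring care would be re-establishing that the Hermite system is determinate, but that reduces to exactly the isomorphism chain already used in Section~\ref{se:bir}.
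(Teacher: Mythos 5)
Your proposal is correct and matches the paper's intent exactly: the paper states Corollary~\ref{co:090304d} as an immediate consequence of Theorem~\ref{th:existence}, since the Hermite data (with each $H_i$ downward closed) is a special case of the Birkhoff data, for which existence was proved via the square nonsingular system on $\overline{H_i}$ and the compatible subsystem on $H_i$. Your added observation that the lower-set hypothesis on $H_i$ plays no role in the existence argument is precisely why the specialization is legitimate, so nothing further is needed.
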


\subsection{Final comments}

\subsubsection*{Increasing the number of restrictions}
In the three multivariate interpolation methods we have studied if we add a new point, it is not necessary, in order to compute an interpolating polynomial, to start again the algorithm from the very beginning. Let us show this situation in the case of the multivariate Birkhoff interpolation method.
Let $p_{t+1}$ be a new point, different from $p_1,\dots,p_t$,
$U_{t+1,1},\ldots,U_{t+1,s(t+1)}$ unitary vectors in $K^n$,
$H_{t+1}\subseteq\mathbb{N}^{s(t+1)}$  and
elements $\{v_{t+1,h}\in{K}\mid\;h\in{H_{t+1}}\}$.
To compute the new interpolating polynomial we only need to compute the following elements:
\begin{enumerate}[(1)]
\item
the new ideal $J_{t+1}$ by computing a Groebner basis $\mathbb{G}_{t+1}$;
\item
the intersection $J\cap{J_{t+1}}$ by computing a Groebner basis $\mathbb{G}$, using the Groebner basis of $J$ and $\mathbb{G}_{t+1}$;
\item
a basis of $K[X_1,\ldots,X_n]/(J\cap{J_{t+1}})$ using $\mathbb{N}^n\setminus\Exp(\mathbb{G})$ and
\item
solve the linear system
\[
\left.
D^{(h)}F(p_i)=v_{i,h},\mbox{ for every }i=1,\ldots,t+1;\;h\in{H_i}
\right\}
\]
\end{enumerate}

We proceed in the same way when we add some extra restriction on high degree in the
derivatives at the points $p_1,\ldots,p_t$.

\subsubsection*{Different orders in $\mathbb{N}^n$ produce a different shape of the interpolating polynomial}

It is of interest to observe that different monomial orders in $\mathbb{N}^n$ give
different shape of the interpolation polynomial. For instance, the lexicographic
monomial order with the ordering of unknowns $X_1>X_2>\cdots>X_n$ produces that the
interpolating polynomial has greater degree in the greatest labeling unknowns.
Otherwise, the graded (reverse) lexicographic order produces interpolating polynomials
in which all unknowns have similar degree. The reason is that the shape of the set
$\mathbb{N}^n\setminus\Exp(\mathbb{G})$ strongly depends on the chosen monomial order.

\subsubsection*{Complexity of the algorithm}

There are different factors in order to perform a quick algorithm.

First we need to compute a Groebner basis for each ideal $I_i$ or $J_i$, depending of
the methods. This process is well established; the speed depends of the kind of numbers
we use in the coordinates of points. For integer numbers it works in an ``acceptable''
way and for other kind of numbers, rational, reals, complexes, single or multiple
precision, etc., it is more and more slow.

The second step is computing a Groebner basis of $I$ or $J$. We observe the best method
is to make it as follows: $I_1\cap{I_2}$, $I_1\cap{I_2}\cap{I_3}$, \ldots The reason is
that anyway we need to perform a total of $t-1$ intersections of ideals and the
described method realizes at the $i$-th steep the intersection
$(I_1\cap\cdots\cap{I_i})\cap{I_{i+1}}$, where at least $I_{i+1}$ has a simple
description.

The third step is to determine a general interpolating polynomial using
$\mathbb{N}^n\setminus\Exp(\mathbb{G})$ as set of indices, and after that to solve a
linear system. It is at the last point where the type of data is again determinant in
order to get quickly a solution. We also may solve numerically this system in order to
get an approximation to the final solution.

\subsubsection*{Existence and uniqueness of the interpolating polynomial}

As we mentioned earlier different monomial orders in $\mathbb{N}^n$ produce different
interpolation polynomials, and we also proved in Theorem~\eqref{th:existence} the
existence of a solution to the multivariate Birkhoff interpolation problem. Otherwise
the uniqueness of an interpolating polynomial depends on the restrictions we have, but in
any case it can be established only modulo the ideal $I$ or $J$, depending of the
method. In Theorem~\eqref{th}, see also Corollary~\eqref{co:090304d}, the uniqueness was
established for the multivariate Hermite interpolation problem. In the Birkhoff
interpolation problem uniqueness fails as we show in the section devoted to examples at
the end of this paper; the reason is the existence of some derivative gaps at some
point.

\section{Multivariate Interpolation Algorithm}\label{se:algo}

We describe the algorithm for the Hermite interpolation problem. With the same notation
that in subsection~\ref{subsec:21}, let $p_1,\ldots,p_t\in\mathbb{A}^n(K)$, such that
$p_i\neq{p_j}$ if $i\neq{j}$, unitary vectors $U_{i,j_i}\in{K^n}$, for $i=1,\ldots,t$,
$j_i=1,\ldots,s(i)$, and elements
$\{v_{i,h}\mid\;h\in{H_i},\;i=1,\ldots,t\}\subseteq{K}$.

The algorithm has three blocks.

(I) The first one determines a Groebner basis of the ideals $I_i$ of all roots of
system~\eqref{eq:4}, 

(II) The second one determines a Groebner basis of $I=I_1\cap\ldots\cap{I_t}$.

(III) The third one gives the interpolated polynomial $F$ satisfying conditions in
\eqref{eq:3}.

We now describe explicitly the developed algorithm for computing a Hermite interpolating
polynomial.

\subsection{Block I}

Compute a Groebner Basis of every ideal $I_i$, for any index $i=1,\ldots,t$.


\begin{tabular}{l}
LET $p:=p_i=(x_1,\ldots,x_n)\in\mathbb{A}^n(K)$ \\
LET $r:=b_i$; $b_i=\max\{|h|:=\sum_{j=1}^{s(i)}h_{j}\mid\;h=(h_{j})_{j}\in{H_{i}}\}$\\
LET $s:=s(i)$\\
LET $H:=H_i$ \\
LET $GEN(I):=\varnothing$\\
IF $r=0$ THEN\\
\hspace*{0.5cm}$GEN(I)=\{X_1-x_1,\ldots,X_n-x_n\}$\\
ELSE \\
\hspace*{0.5cm}FOR $l=1$ TO $r$\\
\hspace*{1.0cm}LET $\alpha(l):=\{\alpha\in\mathbb{N}^n\mid\;|\alpha|:=\alpha_1+\cdots+\alpha_n=l\}$\\
\hspace*{1.0cm}LET $P:=\sum_{|\alpha|=l}A(\alpha)X_1^{\alpha_1}\cdots{X_n^{\alpha_n}}$\\
\hspace*{1.0cm}LET $Der_l:=\{h\in{H}\mid\;|h|=l\}$\\
\hspace*{1.0cm}LET $m:=\#(Der_l)$\\
\hspace*{1.0cm}FOR $k=1$ TO $m$\\
\hspace*{1.5cm}LET $h(k)\in{Der_l}$\\
\hspace*{1.5cm}LET $P_k:=D^{h(k)}P=\sum_{|\alpha|=l}c_k(\alpha)A(\alpha)$\\
\hspace*{1.0cm}ENDFOR\\
\hspace*{1.0cm}LET $f:K^{\#(\alpha(l))}\longrightarrow{K^m}$,
    $f((A(\alpha))_{\alpha\in\alpha(l)})=(\sum_{|\alpha|=l}c_k(\alpha)A(\alpha))_{k=1}^m$\\
\hspace*{1.0cm}COMPUTE a basis of the kernel of the linear map $f$\\
\hspace*{1.0cm}FOR $q=1$ TO $dim(Ker(f))$\\
\hspace*{1.5cm}LET $d_q(\alpha)$ be the $q$-th element in the basis of $Ker(f)$\\
\hspace*{1.5cm}ADD TO $GEN(I)$ the element
    $\sum_{|\alpha|=l}d_q(\alpha)(X_1-x_1)^{\alpha_1}\cdots(X_n-x_n)^{\alpha_n}$\\
\hspace*{1.0cm}ENDFOR\\
\hspace*{0.5cm}ENDFOR\\
\hspace*{0.5cm}ADD TO $GEN(I)$ all the monomials of degree $r+1$ in $X_1-x_1,\ldots,X_n-x_n$\\
ENDIF\\
COMPUTE a Groebner basis $\mathbb{B}$ of $\langle{GEN(I)}\rangle$\\
LET $I_i=\langle\mathbb{B}\rangle$.
\end{tabular}

\subsection{Block II}

Compute $I=I_1\cap\ldots\cap{I_t}$ giving a Groebner basis.

\vskip.5cm

\begin{tabular}{l}
LET $\mathbb{I}:=\{I_1,\ldots,I_t\}$\\
LET $\mathcal{O}$ the graded reverse lexicographic order on $X=\{X_1,\ldots,X_n\}$\\
LET $\mathcal{P}$ the lexicographic order on $\{X_0,X\}$ related to $\mathcal{O}$,\\
\hspace*{1.5cm}where $X_0$ is the auxiliary variable to realize the intersection through the elimination method\\
LET $I=I_1$\\
FOR $k=2$ TO $t$\\
\hspace*{0.5cm}LET $I=$INTERSECTION$(I,I_k,\mathcal{P})$\\
ENDFOR\\
\end{tabular}

\vskip.5cm

INTERSECTION$(I,J,\mathcal{P})$ provides the ideal intersection of $I$ and $J$ giving a Groebner basis of it with respect to the
order $\mathcal{P}$.

\subsection{Block III}

Let us to show the algorithm to find the set $\mathbb{N}^n\setminus\Exp(I)$. After that
we only need to establish the system of linear equations in \eqref{eq:3} and solve it to
find the interpolating polynomial.

\vskip.5cm

\begin{tabular}{l}
LET $I:=\langle{g_1,\ldots,g_k}\rangle$, where $\{g_1,\ldots,g_k\}$ is the Groebner basis obtained in Block II\\
LET $(\alpha_{t,1},\ldots,\alpha_{t,n}):=$ The leading exponent of $g_t$\\
LET $m_i:=max\{\alpha_{1,i},\ldots,\alpha_{k,i}\}$\\
LET $p:=(m_1-1,\ldots,m_n-1)$\\
COMPUTE $\exp:=\{h\in\mathbb{N}^n\mid p-h\in\mathbb{N}^n\}$\\
LET $b:=\varnothing$\\
\hspace*{0.5cm}FOR i=1 TO \#(exp)\\
\hspace*{0.5cm}LET $bt:=\emptyset$\\
\hspace*{0.5cm}LET $j:=0$\\
\hspace*{1.0cm}WHILE $(j<k$ AND $bt=\varnothing)$ \\
\hspace*{1.5cm}$j++$\\
\hspace*{1.5cm}IF $(exp_{i,s}>=\alpha_{j,s}, \forall s=1,..,n)$ THEN \\
\hspace*{2.0cm}$bt=\{exp_i\}$\\
\hspace*{1.5cm}ENDIF\\
\hspace*{1.0cm}ENDWHILE\\
\hspace*{0.5cm}$b=b \cup bt$\\
\hspace*{0.5cm}ENDFOR\\
LET $\exp:=\exp-b$
\end{tabular}

\vskip.5cm

Therefore we take
\[
F=\sum_{\alpha\in{\exp}}A(\alpha)X^\alpha.
\]

\begin{remark}
The Birkhoff interpolation problem admits a similar algorithm in which blocks II and III
are applied in the same way and for block I we have that, in the notation used in this
paper, a minimal set of generators of the ideal $J_i$ is directly given by
$$
\{(X_1-p_{i,1})^{e_1}\ldots (X_n-p_{i,n})^{e_n}\mid\;e_1+\cdots+e_n=b_{i}+1\}.
$$
\end{remark}

\section{Examples}

\begin{example}
Let us show an example in which the use of derivatives of higher order
gives a more exact approximation to the surface we are studying. We begin with a
ellipsoid $E$ of equation
\[
F(X,Y)=(cos(X)cos(Y),cos(X)sin(Y),3sin(X))
\]
and fix eight points in $E$, for instance
\[
p_1=(2,0),\,
p_2=(2,\pi/2),\,
p_3=(2,\pi),\,
p_4=(2,3\pi/2),
\]
\[
p_5=(3,\pi/4),\,
p_6=(3,3\pi/4),\,
p_7=(3,5\pi/4)\mbox{ and }
p_8=(3,7\pi/4).
\]
\end{example}

We have the following picture:

\begin{center}
\begin{tabular}{lll}
\includegraphics[height=.3\textheight]{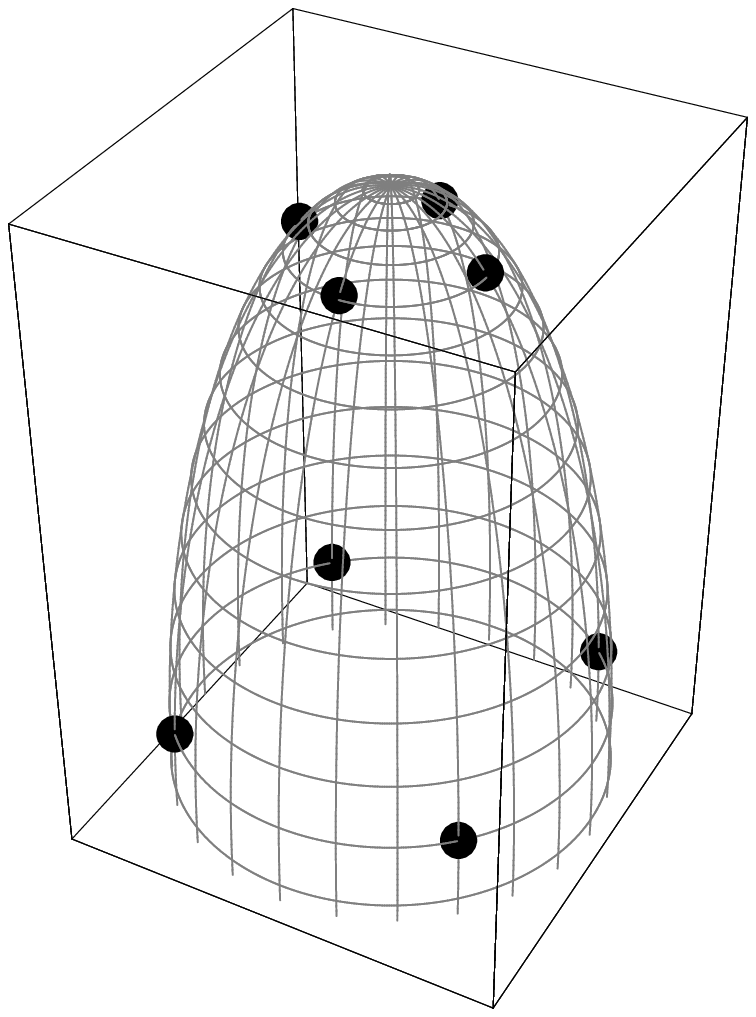}&
\hspace*{2cm}&
\includegraphics[height=.3\textheight]{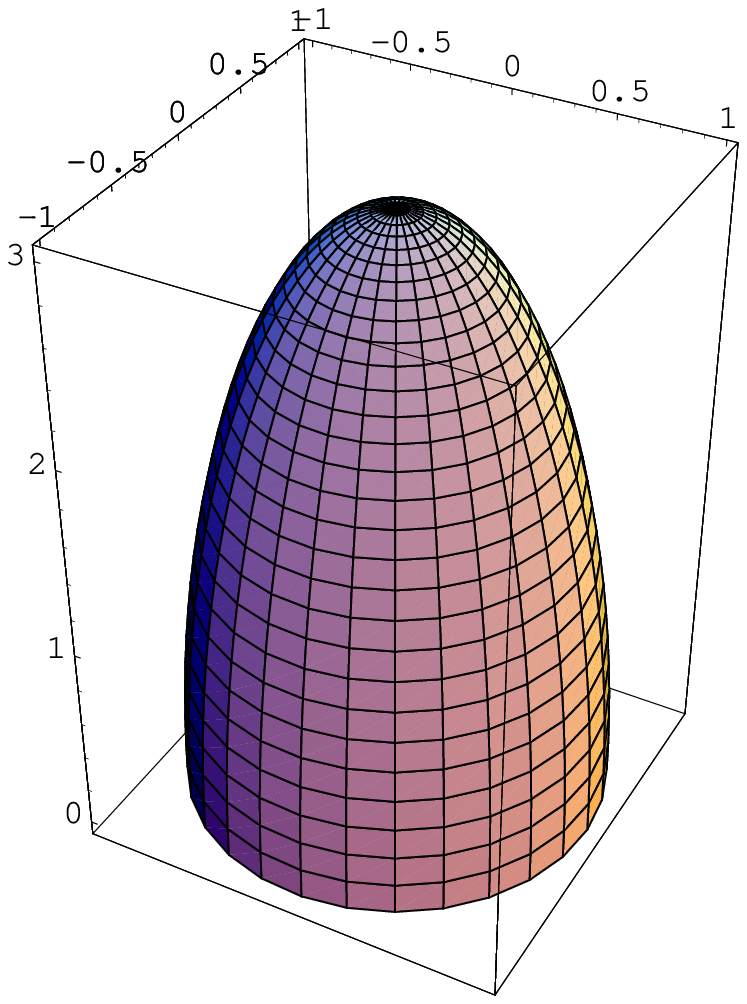}\\
\end{tabular}
\end{center}

We may apply multivariate Lagrange interpolation with the following values
\[
\begin{array}{|l|l|}\hline
p_1=(2,0),\,            &v_1=(cos(2),0,3sin(2))\\
p_2=(2,\pi/2),\,        &v_2=(0,cos(2),3sin(2))\\
p_3=(2,\pi),\,          &v_3=(-cos(2),0,2sin(2))\\
p_4=(2,3\pi/2),\,       &v_4=(0,-cos(2),3sin(2))\\
p_5=(3,\pi/4),\,        &v_5=(cos(3)/\sqrt{2},cos(3)/\sqrt{2},3sin(3))\\
p_6=(3,3\pi/4),\,       &v_6=(-cos(3)/\sqrt{2},cos(3)/\sqrt{2},3sin(3))\\
p_7=(3,5\pi/4),\,       &v_7=(-cos(3)/\sqrt{2},-cos(3)/\sqrt{2},3sin(3))\\
p_8=(3,7\pi/4),\,.      &v_8=(cos(3)/\sqrt{2},-cos(3)/\sqrt{2},3sin(3))\\\hline
\end{array}
\]

and represent this polynomial. Thus we obtain:
\begin{center}
\begin{tabular}{lll}
\includegraphics[height=.3\textheight]{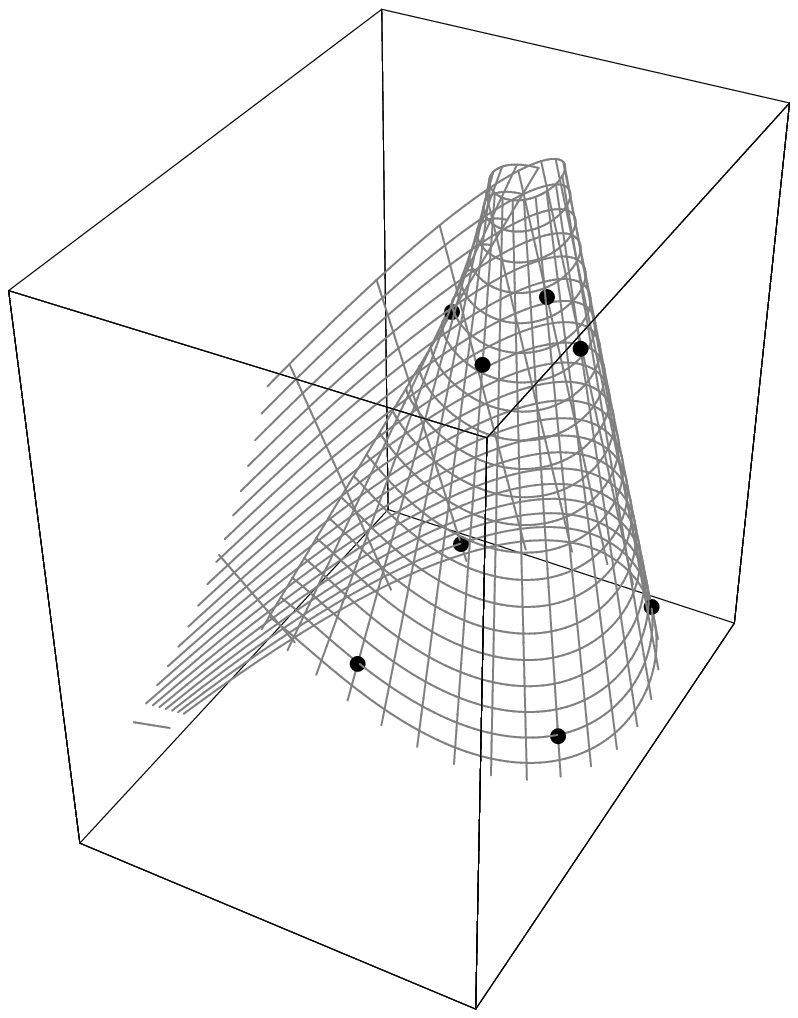}&
\hspace*{2cm}&
\includegraphics[height=.3\textheight]{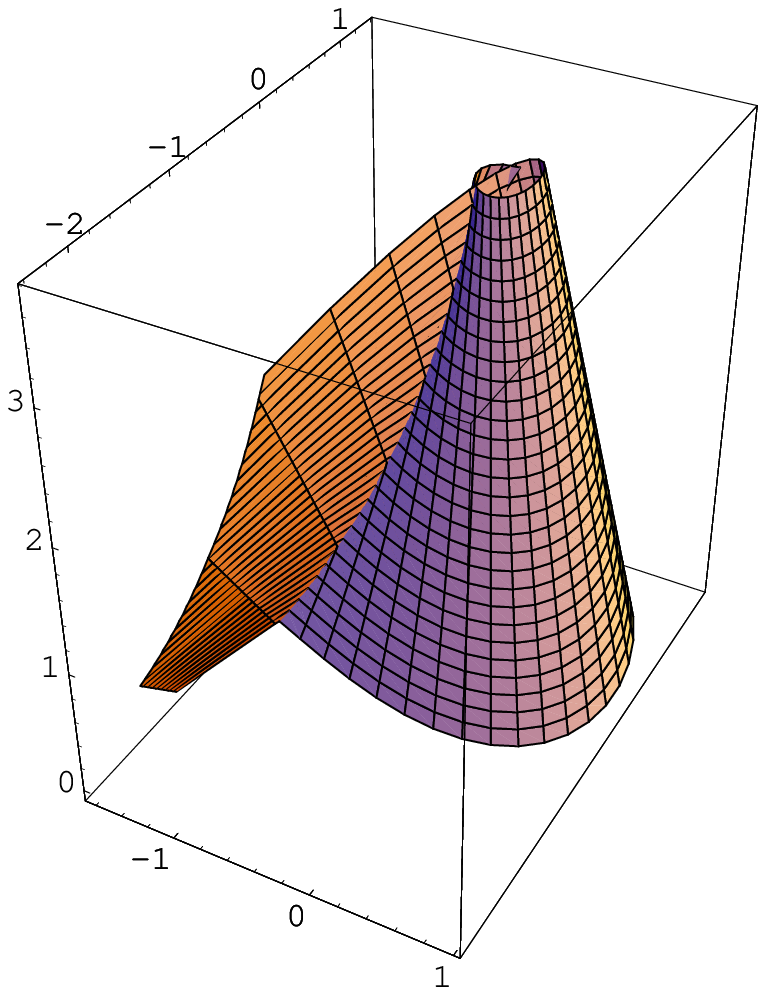}\\
\end{tabular}
\end{center}

Now we try to compute the multivariate Hermite polynomial with derivatives up to order one. The table of data is:
\begin{equation}\label{ta:090305}
\begin{array}{|l|l|l|l|}\hline
 p_i&F(p_i)&D_{(1,0)}F(p_i)&D_{(0,1)}F(p_i)\\\hline
 (2,0) & (\cos (2),0,3 \sin (2)) & (-\sin (2),0,3 \cos (2))
   & (0,\cos (2),0) \\
 \left(2,\frac{\pi }{2}\right) & (0,\cos (2),3 \sin (2)) &
   (0,-\sin (2),3 \cos (2)) & (-\cos (2),0,0) \\
 (2,\pi ) & (-\cos (2),0,3 \sin (2)) & (\sin (2),0,3 \cos
   (2)) & (0,-\cos (2),0) \\
 \left(2,\frac{3 \pi }{2}\right) & (0,-\cos (2),3 \sin (2)) &
   (0,\sin (2),3 \cos (2)) & (\cos (2),0,0) \\
 \left(3,\frac{\pi }{4}\right) & \left(\frac{\cos
   (3)}{\sqrt{2}},\frac{\cos (3)}{\sqrt{2}},3 \sin (3)\right) &
   \left(-\frac{\sin (3)}{\sqrt{2}},-\frac{\sin (3)}{\sqrt{2}},3
   \cos (3)\right) & \left(-\frac{\cos (3)}{\sqrt{2}},\frac{\cos
   (3)}{\sqrt{2}},0\right) \\
 \left(3,\frac{3 \pi }{4}\right) & \left(-\frac{\cos
   (3)}{\sqrt{2}},\frac{\cos (3)}{\sqrt{2}},3 \sin (3)\right) &
   \left(\frac{\sin (3)}{\sqrt{2}},-\frac{\sin (3)}{\sqrt{2}},3
   \cos (3)\right) & \left(-\frac{\cos
   (3)}{\sqrt{2}},-\frac{\cos (3)}{\sqrt{2}},0\right) \\
 \left(3,\frac{5 \pi }{4}\right) & \left(-\frac{\cos
   (3)}{\sqrt{2}},-\frac{\cos (3)}{\sqrt{2}},3 \sin (3)\right) &
   \left(\frac{\sin (3)}{\sqrt{2}},\frac{\sin (3)}{\sqrt{2}},3
   \cos (3)\right) & \left(\frac{\cos (3)}{\sqrt{2}},-\frac{\cos
   (3)}{\sqrt{2}},0\right) \\
 \left(3,\frac{7 \pi }{4}\right) & \left(\frac{\cos
   (3)}{\sqrt{2}},-\frac{\cos (3)}{\sqrt{2}},3 \sin (3)\right) &
   \left(-\frac{\sin (3)}{\sqrt{2}},\frac{\sin (3)}{\sqrt{2}},3
   \cos (3)\right) & \left(\frac{\cos (3)}{\sqrt{2}},\frac{\cos
   (3)}{\sqrt{2}},0\right)\\\hline
\end{array}
\end{equation}

If we represent the Hermite interpolation polynomial we obtain:
\begin{center}
\begin{tabular}{lll}
\includegraphics[height=.3\textheight]{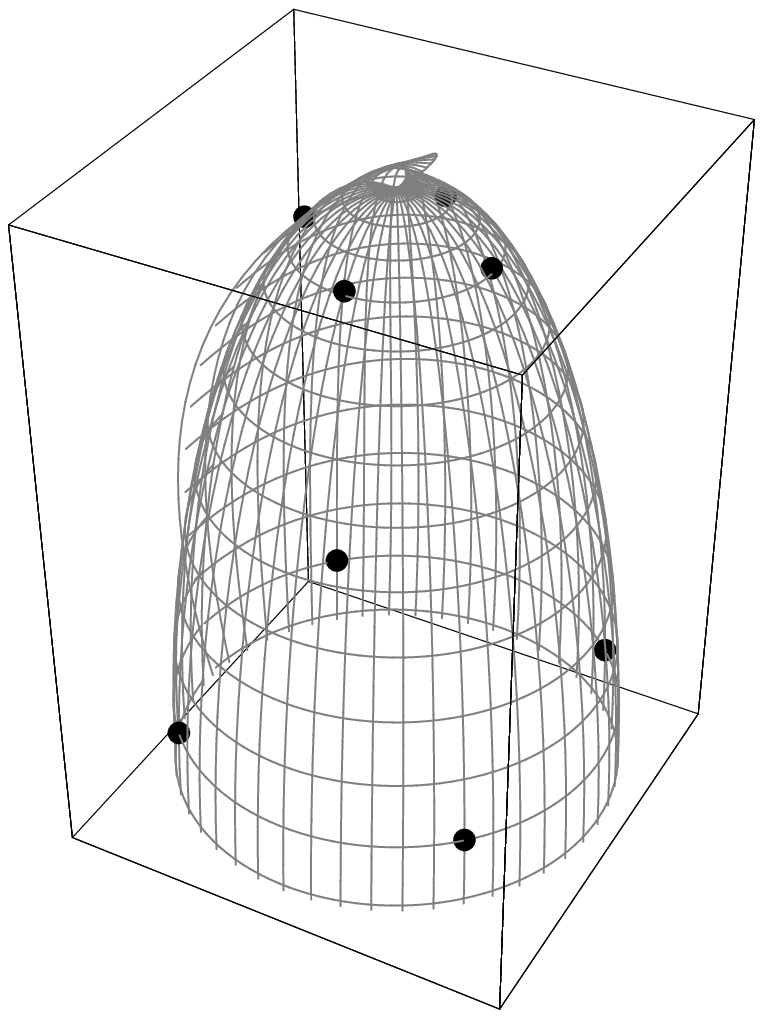}&
\hspace*{2cm}&
\includegraphics[height=.3\textheight]{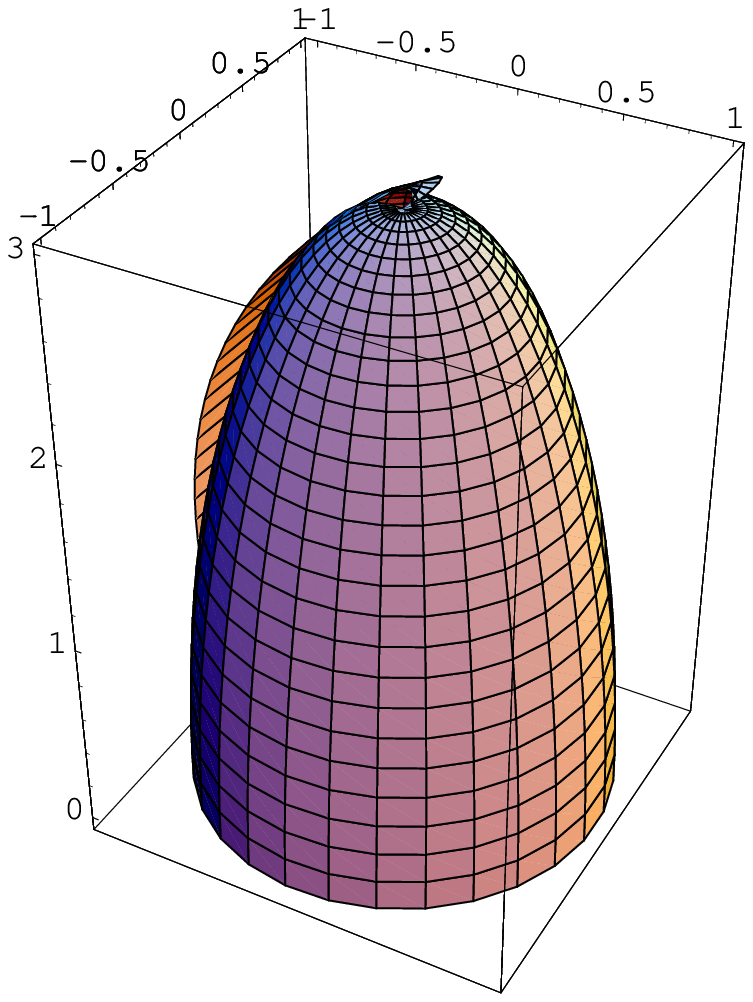}\\
\end{tabular}
\end{center}

Observe that in this case the new surface is a better approximation to the original one.

As a third guest we compute the multivariate Hermite polynomial with derivative up to order two. In this case we need to add new columns to the table given in~\eqref{ta:090305}.

\begin{equation}\label{eq:090305b}
\begin{array}{|l|l|l|l|l|}\hline
p_i&&D_{(1,0)}D_{(1,0)}F(p_i)&D_{(1,0)}D_{(0,1)}F(p_i)&D_{(0,1)}D_{(0,1)}F(p_i)\\\hline
(2,0)
& \cdots
& (-\cos (2),0,-3 \sin (2))
& (0,-\sin (2),0)
& (-\cos (2),0,0) \\
\left(2,\frac{\pi }{2}\right)
& \cdots
& (0,-\cos  (2),-3 \sin (2))
& (\sin (2),0,0)
& (0,-\cos (2),0) \\
(2,\pi )
& \cdots
& (\cos (2),0,-3 \sin (2))
& (0,\sin (2),0)
& (\cos (2),0,0) \\
\left(2,\frac{3 \pi }{2}\right)
& \cdots
& (0,\cos (2),-3\sin (2))
& (-\sin (2),0,0)
& (0,\cos (2),0) \\
\left(3,\frac{\pi }{4}\right)
& \cdots
& \left(-\frac{\cos(3)}{\sqrt{2}},-\frac{\cos (3)}{\sqrt{2}},-3 \sin (3)\right)
& \left(\frac{\sin (3)}{\sqrt{2}},-\frac{\sin(3)}{\sqrt{2}},0\right)
& \left(-\frac{\cos(3)}{\sqrt{2}},-\frac{\cos (3)}{\sqrt{2}},0\right) \\
\left(3,\frac{3 \pi }{4}\right)
& \cdots
& \left(\frac{\cos (3)}{\sqrt{2}},-\frac{\cos (3)}{\sqrt{2}},-3\sin (3)\right)
& \left(\frac{\sin (3)}{\sqrt{2}},\frac{\sin(3)}{\sqrt{2}},0\right)
& \left(\frac{\cos(3)}{\sqrt{2}},-\frac{\cos (3)}{\sqrt{2}},0\right) \\
\left(3,\frac{5 \pi }{4}\right)
& \cdots
& \left(\frac{\cos(3)}{\sqrt{2}},\frac{\cos (3)}{\sqrt{2}},-3 \sin (3)\right)
& \left(-\frac{\sin (3)}{\sqrt{2}},\frac{\sin(3)}{\sqrt{2}},0\right)
& \left(\frac{\cos(3)}{\sqrt{2}},\frac{\cos (3)}{\sqrt{2}},0\right) \\
\left(3,\frac{7 \pi }{4}\right)
& \cdots
& \left(-\frac{\cos(3)}{\sqrt{2}},\frac{\cos (3)}{\sqrt{2}},-3 \sin (3)\right)
& \left(-\frac{\sin (3)}{\sqrt{2}},-\frac{\sin(3)}{\sqrt{2}},0\right)
& \left(-\frac{\cos(3)}{\sqrt{2}},\frac{\cos (3)}{\sqrt{2}},0\right)\\\hline
\end{array}
\end{equation}

If we represent the multivariate Hermite interpolation polynomial we obtain:
\begin{center}
\begin{tabular}{lll}
\includegraphics[height=.3\textheight]{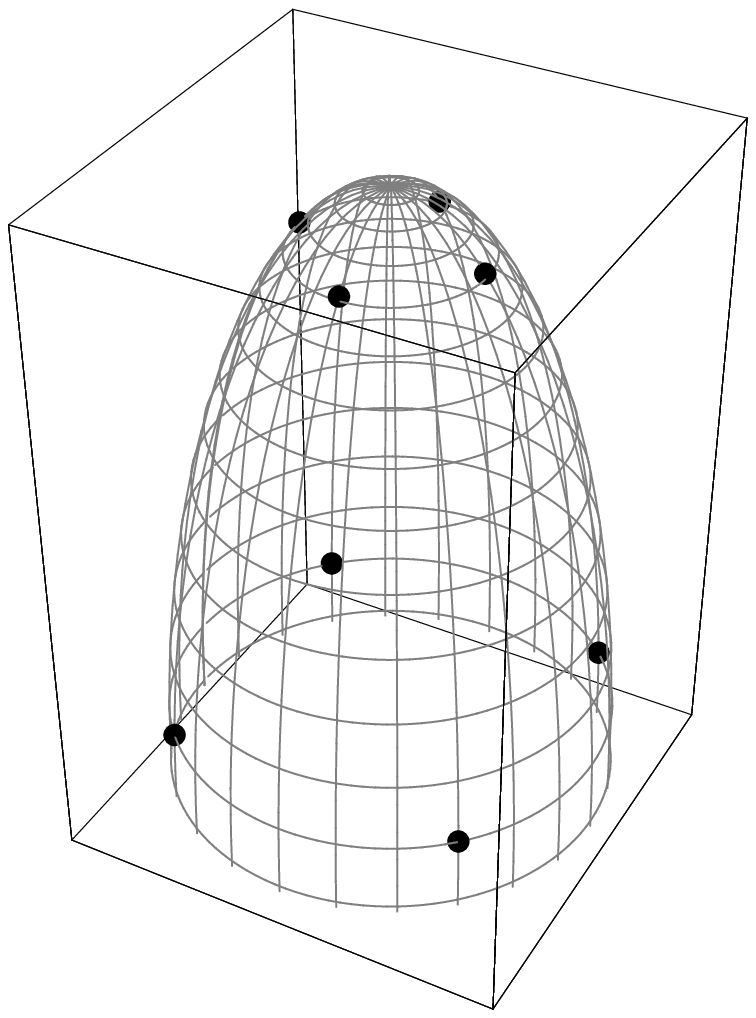}&
\hspace*{2cm}&
\includegraphics[height=.3\textheight]{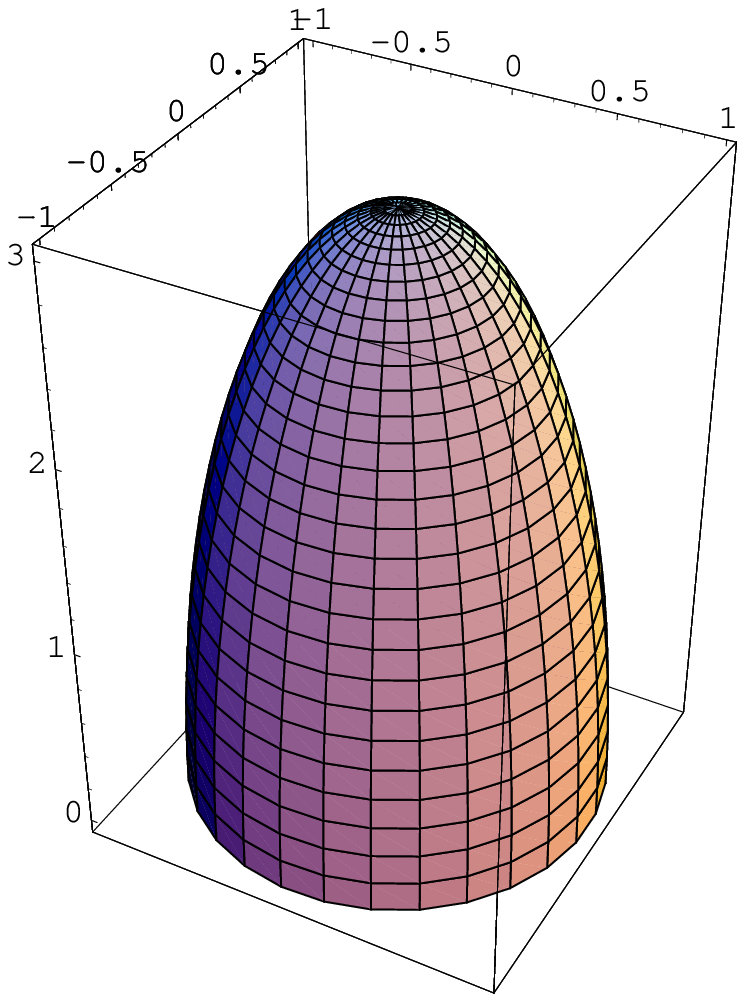}\\
\end{tabular}
\end{center}

This representation is a good approximation to the original function.

\begin{problem}
It would be of interest to measure the error produced by each polynomial interpolation
according to the order of the involved derivatives. Nevertheless we have chosen this
graphical approach to exemplify the goodness of the multivariate Hermite interpolation
method and postpone a deep study of errors to a forthcoming work.
\end{problem}

\begin{example}
Let us show a new example in which we increase the number of variables, from two to
four. We also use variables in different types: polynomial, logarithmic and fractional.
We are interested in studying the following function:
\[
F(X,Y,Z,T)=\frac{(-T+Z)^2\mathrm{Log}[1/X]}{Y}
\]
Observe that this function is of polynomial type in $T,Z$, of logarithmic type in $1/X$
and of fractional type in $X,Y$. We propose to study the error produced by the
interpolating polynomial when we consider a grid of points and evaluate the derivatives
at these points up to order 2, and show this error through the graphical representation
of sections fixing pairs of two variables.
\end{example}

Let us consider the following grid:
$\{(x,y,z,t)\in\mathbb{R}^4\mid\;{x,y,z,t}\in\{1,2,3\}\}$ and evaluate the functions:
\begin{multline*}
F, D_{(1,0,0,0)}F, D_{(1,1,0,0)}F, D_{(1,1,1,0)}F, D_{(1,1,1,1)}F,\\
D_{(1,0,0,0)}D_{(1,0,0,0)}F, D_{(1,1,0,0)}D_{(1,1,0,0)}F, D_{(1,1,0,0)}D_{(1,0,0,0)}F
\end{multline*}
in the grid. We call $P$ the interpolating polynomial.

In the following table we represent the sections of $F$ and $P$ for the following values
of $x\in[1,3],y=2,z\in[1,3],t=2$.

\begin{center}
\begin{tabular}{ccc}
\includegraphics[height=.18\textheight]{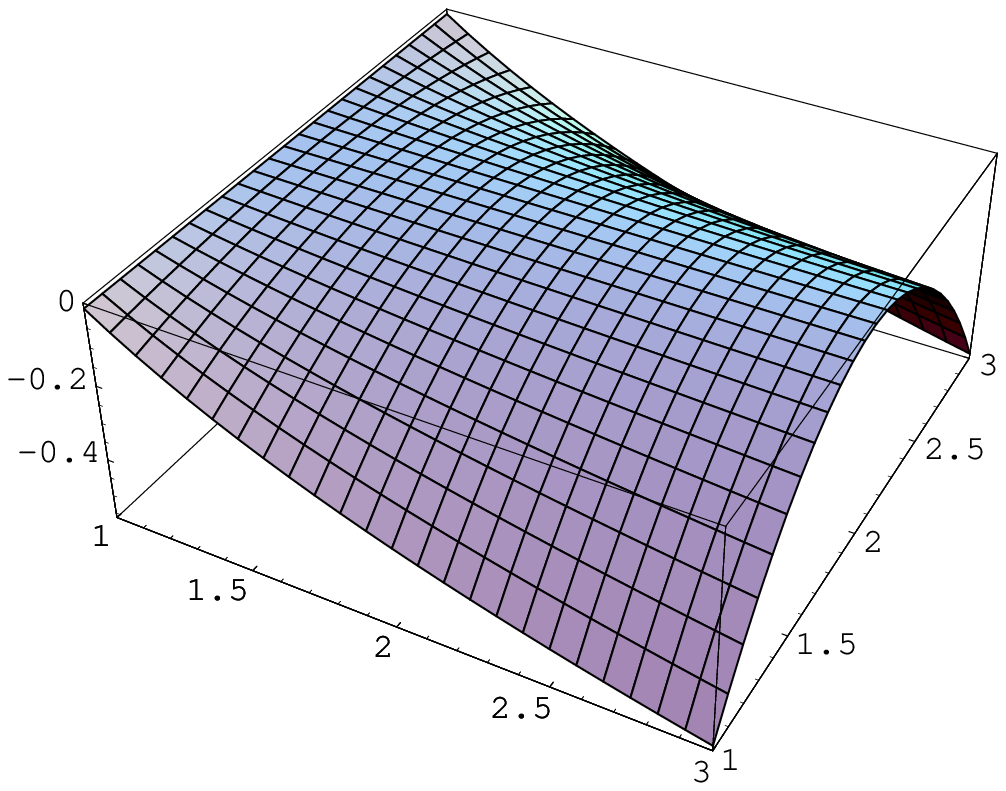}&
\includegraphics[height=.18\textheight]{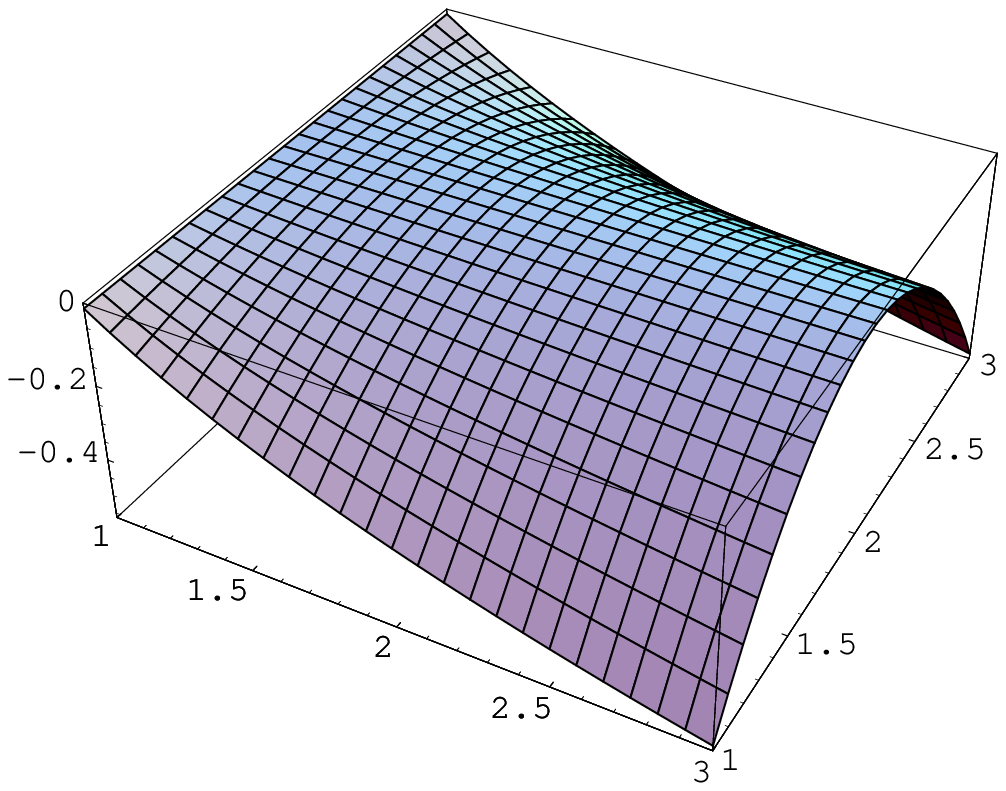}&
\includegraphics[height=.21\textheight]{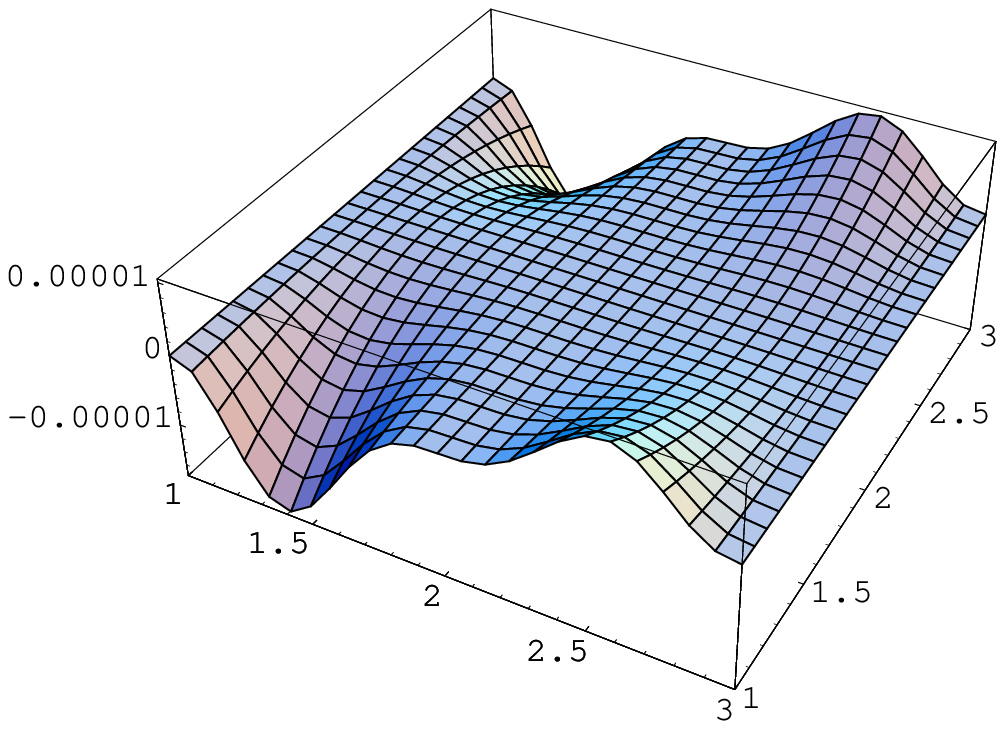}\\
F(X,Y,Z,T)&P(X,Y,Z,T)&$F-P$\\
\end{tabular}
\end{center}

In this case the maximum error is smaller than $0.000015$, and it is reached next to
$x=2.75$ and $z=1$ or 3. We have a similar behavior when we fix $x$ and $y\in[1,3]$; in
this case the maximum error is reached next to $y=1.4$. Hence we are interested in
fixing these two variables in these values. The representation of this section, for the
following values: $x=2.75,y=1.4,z,t\in[1,3]$.

\begin{center}
\begin{tabular}{ccc}
\includegraphics[height=.18\textheight]{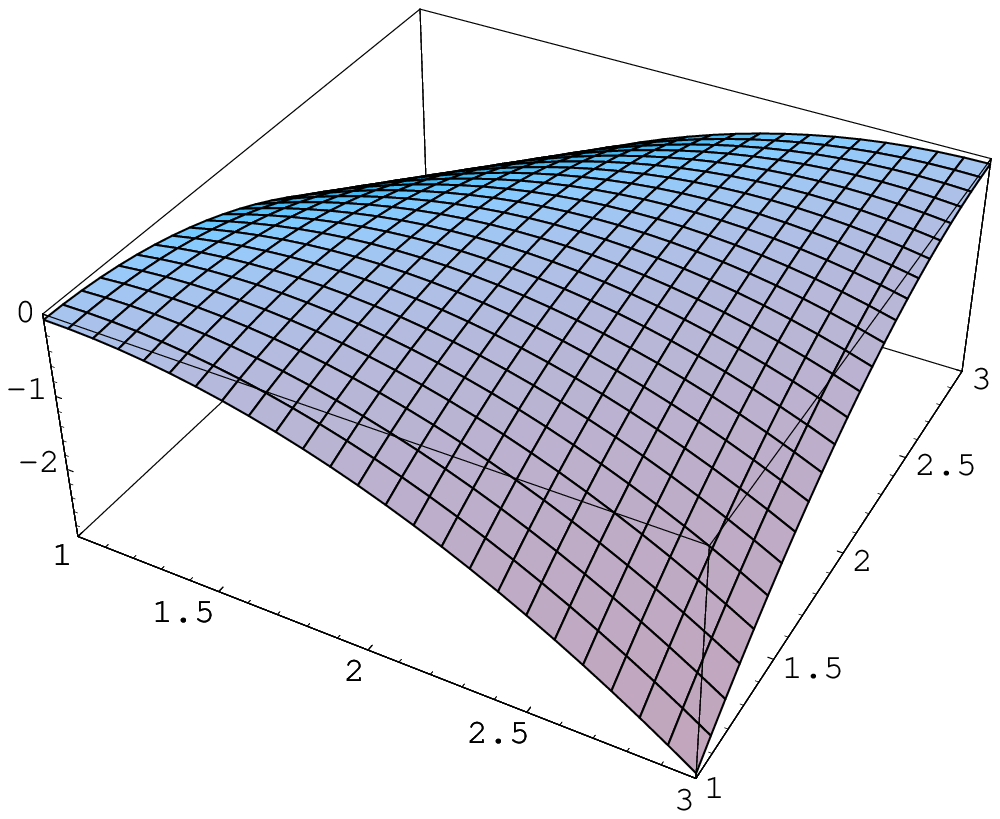}&
\includegraphics[height=.18\textheight]{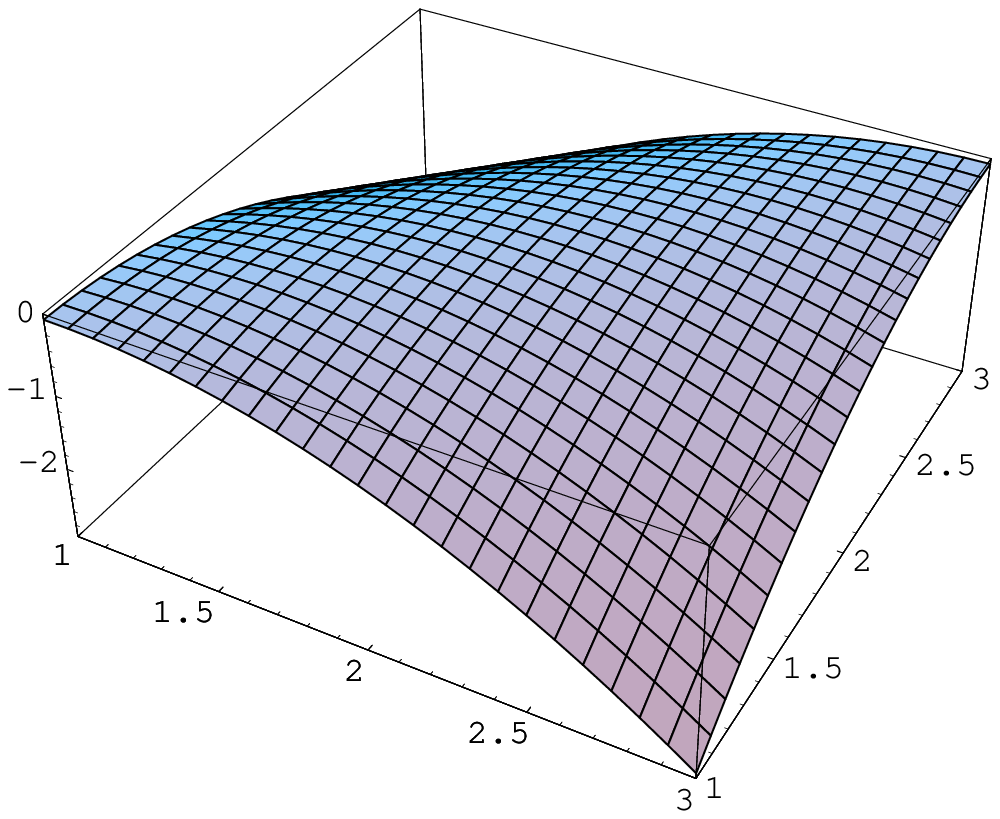}&
\includegraphics[height=.21\textheight]{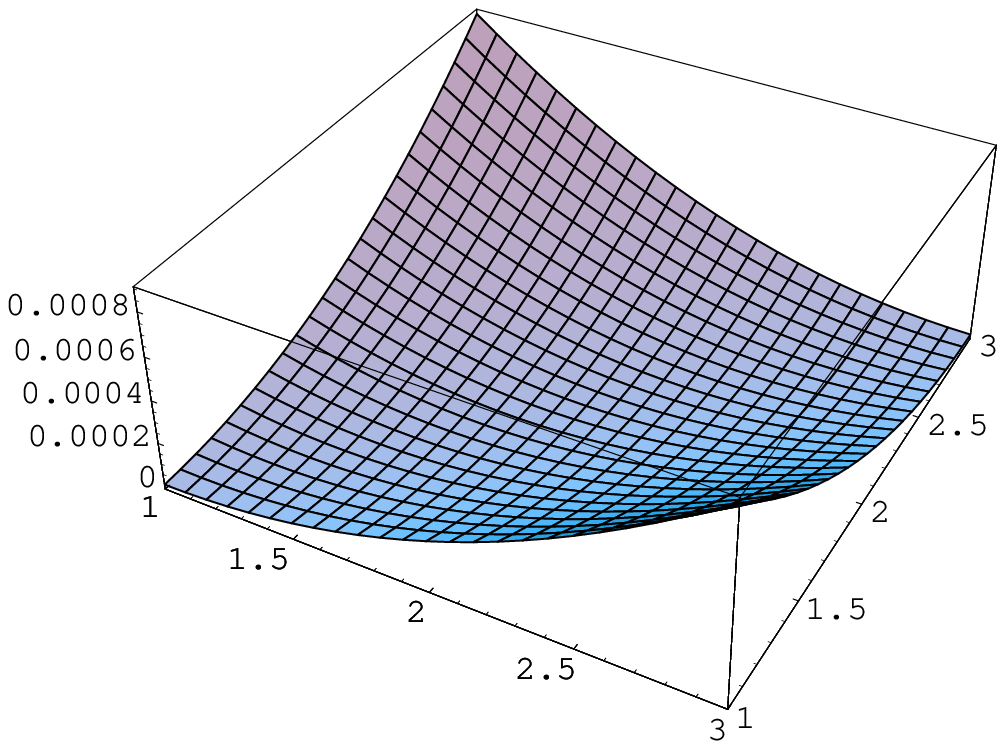}\\
F(X,Y,Z,T)&P(X,Y,Z,T)&$F-P$\\
\end{tabular}
\end{center}

In the case of $z,t$ the maximum error is reached next to $z,t=1$ or 3. If we fix these
values $z=1$, $t=3$ and allow $x,y$ to vary in $[1,3]$, we obtain the following
representation:

\begin{center}
\begin{tabular}{ccc}
\includegraphics[height=.18\textheight]{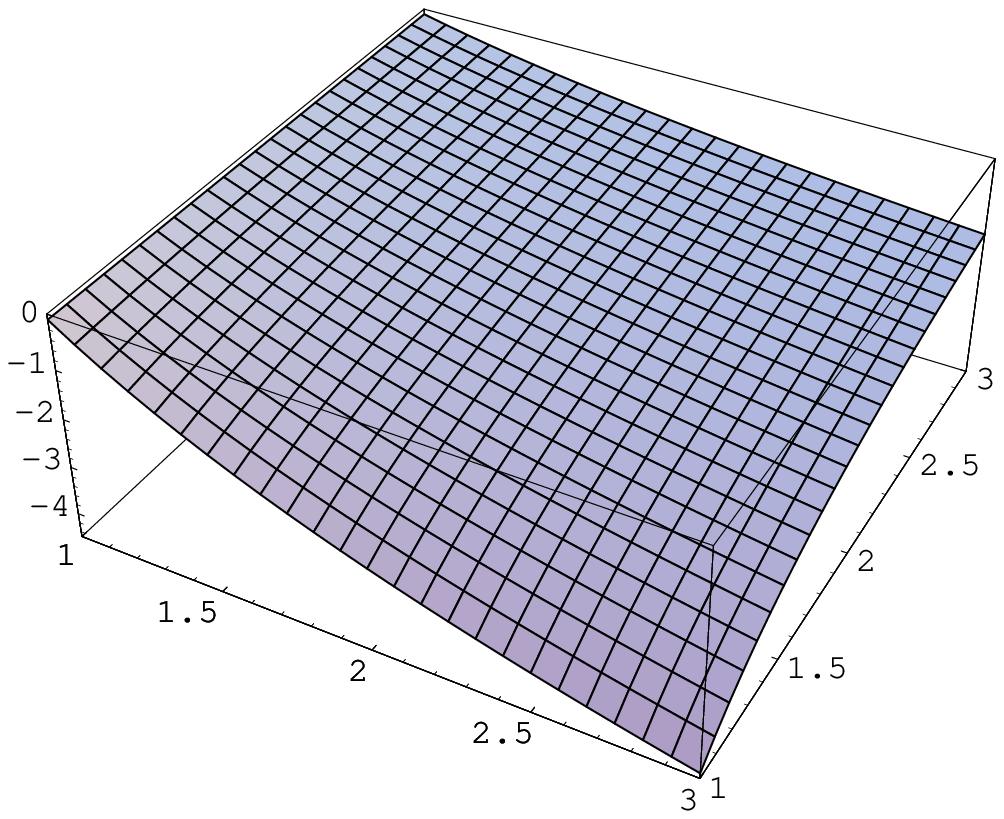}&
\includegraphics[height=.18\textheight]{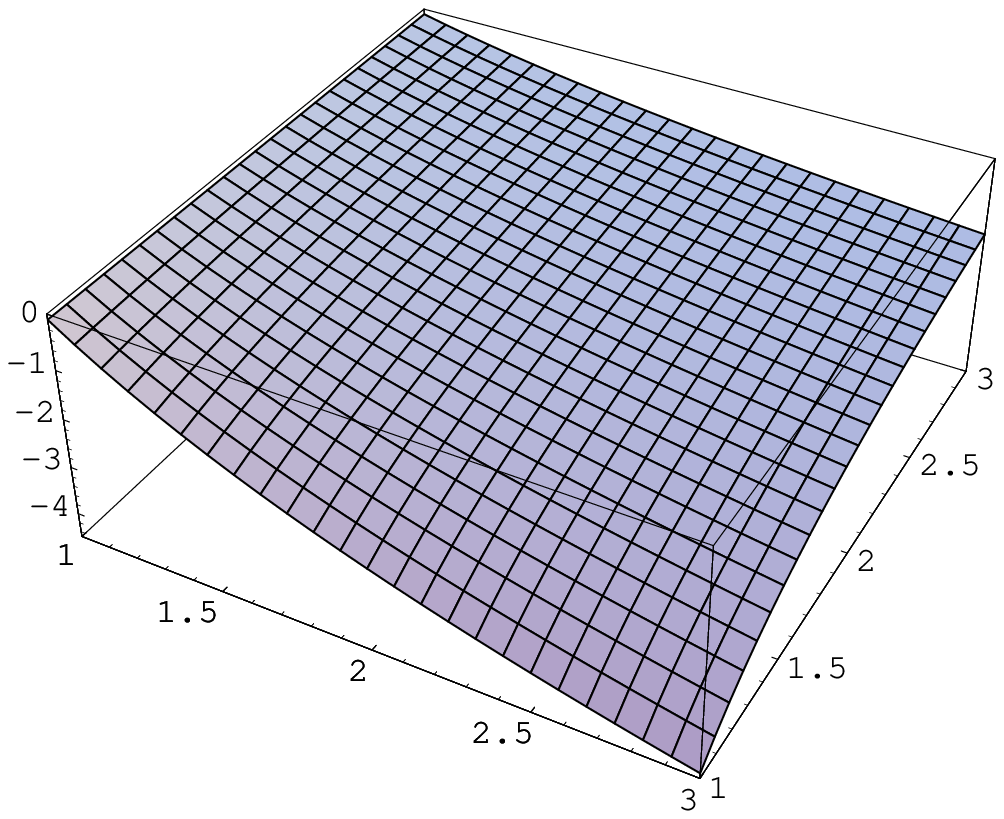}&
\includegraphics[height=.21\textheight]{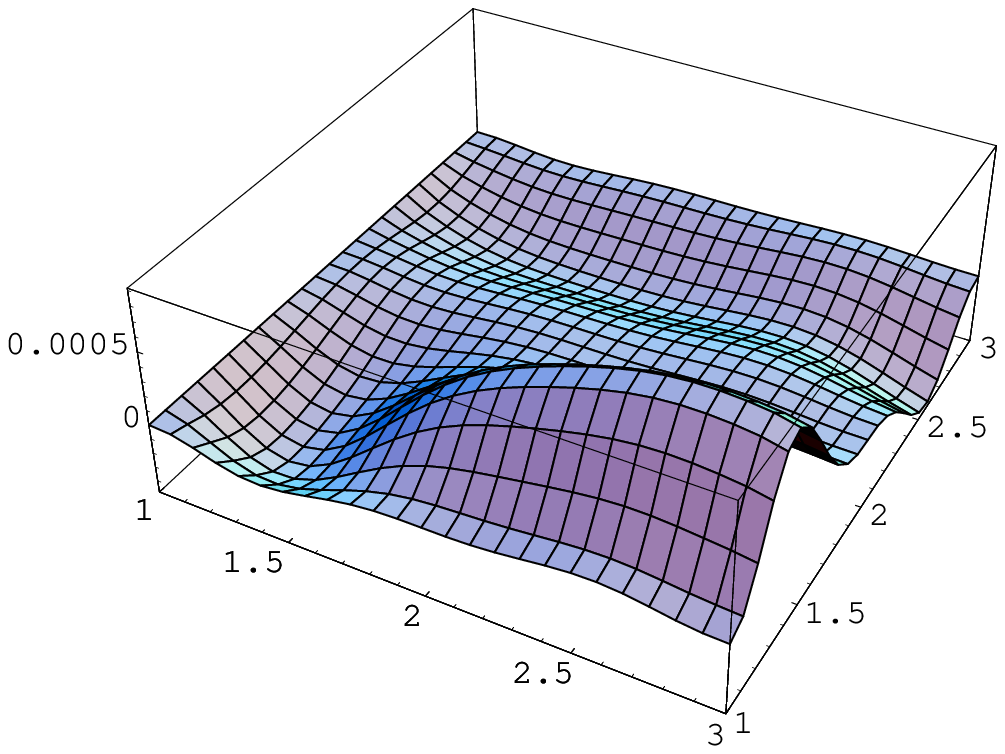}\\
F(X,Y,Z,T)&P(X,Y,Z,T)&$F-P$\\
\end{tabular}
\end{center}

In this case the maximum error is over $0.001$.

As a final example let us study a little variation of the former one in which we apply
the Birkhoff interpolating method.

\begin{example}
Take the real function $G(X,Y,Z)=F(X,Y,Z,2)$, the grid of points
$\{(x,y,z)\mid\;x,y,z\in\{1,2,3\}\}$ and evaluate the functions:
$$
G, D_{(1,1,0)}G, D_{(1,1,1)}G, D_{(1,0,0)}D_{(1,0,0)}G, D_{(1,0,0)}D_{(1,1,0)}G,
D_{(1,1,0)}D_{(1,1,0)}G.
$$
In order to determine a Birkhoff interpolating polynomial we compute the reduced
Groebner basis of the ideal $J$, see Section~\ref{se:bir}; this ideal has codimension
$27\times(1+3+6)=270$. Each gap in the Birkhoff conditions produces a parameter in the
interpolating polynomial; in this case we have $27\times4=108$ gaps which coincide with
the number of parameters. Hence the general Birkhoff interpolating polynomial is
determined as a solution of a system of 162 independent linear equations, whose
expression we omit because its large size.
\end{example}

\noindent{\authorA}. \direcA
\newline
\noindent{\authorB}. \direcB
\newline
\noindent{\authorC}. \direcC
\newline
\noindent{\authorD}. \direcD

\end{document}